\newtheorem{theorem}{Theorem}
\newtheorem{lemma}[theorem]{Lemma}
\theoremstyle{definition} 
\newtheorem{definition}[theorem]{Definition}
\theoremstyle{remark} 
\newcommand{\mzncarrot}[4][(0,0)]{
    \begin{scope}[shift={#1}]
    \shade[top color=#2, bottom color=#2, shading angle=90, draw=#2!90!black, rounded corners=0.5ex, thick] (0.5,0.25) -- ++(0,1.5+.25) -- ++(0.5,0.25) -- ++(1.5,0) -- ++(1,-.5) -- ++(0,-1.45) -- cycle;
    \draw[thick, rounded corners=0.2ex, fill=white, thick] (1,2.1) -- ++(0.5,0.45) -- ++(0.8,0) -- ++(0.3,-0.45) -- cycle;
    \draw[thick] (2.1-.25,2.1) -- ++(0,.45);
    \if\relax\detokenize{#3}\relax 
    \else
    \draw[fill=#2!20, thin, draw = #2] (2.1-.25,1.3) circle[radius=.65] 
        node [anchor = center, rotate = #4]{\small #3};
    \fi
    \draw[fill=gray!80,thin] (1.375-.25,.25) circle[radius=.2];
    \draw[fill=gray!80,thin] (2.76-.25,.25) circle[radius=.2];
    \end{scope}
}
\newcommand{\mzncar}[3][(0,0)]{
    \mzncarrot[#1]{#2}{#3}{0}
}
\newcommand{\tikspotc}[3][(0,0)]{
    \begin{scope}[shift={#1}]
        \draw[fill = #3, rounded corners = .3ex, draw = #3] (0,-.4) rectangle node[text = white]{#2} ++(4, -1.2);
        \draw[-to, white,very thick] (.5,-.4-.6)--++(.001,0);
    \end{scope}
}
\newcommand{\tikspot}[2][(0,0)]{
    \tikspotc[#1]{#2}{black}
}
\newcommand{\tikpref}[3][0,0]{
    \begin{scope}[shift={#1}]
        \draw[fill=#2!20, draw = #2] (0,0) circle[radius=.65] node [anchor = center]{\small #3};
    \end{scope}
}
\title{Preference-restricted parking functions}
\author[Bown]{Jasper Bown}
\address[J. Bown]{University of Minnesota --- Twin Cities, United States}
\email{bown0009@umn.edu}
\author[Kagey]{Peter Kagey}
\address[P. Kagey]{California State Polytechnic University, Pomona}
\email{pkagey@cpp.edu}
\author[Kappler]{Alan Kappler}
\address[A. Kappler]{Harvey Mudd College, United States}
\email{akappler@g.hmc.edu}
\author[Orrison]{Michael E.~Orrison}
\address[M.~Orrison]{Harvey Mudd College, United States}
\email{orrison@hmc.edu}
\author[Thadani]{Jayden Thadani}
\address[J. Thadani]{Harvey Mudd College, United States}
\email{jthadani@hmc.edu}
\begin{document}

\begin{abstract}
     A parking function is a function $\pi:[n]\to [n]$ whose $i$th-smallest output is at most $i,$ corresponding to a parking procedure for $n$ cars on a one-way street. We refine this concept by introducing preference-restricted parking functions, which are parking functions with codomain restricted to some $S\subseteq[n]$. Particular choices of $S$ yield new combinatorial interpretations of previous results about variant parking procedures, and new results too. In particular we consider prime parking functions, parking procedures with fewer spots than cars, and parking functions where each spot has space for multiple cars. We also use restricted parking functions to reprove Abel's binomial theorem.
\end{abstract}

\maketitle

\section{Introduction} \label{sec:intro}

Take $n$ cars for some $n\in\mathbb{Z}_{> 0},$ all of which are attempting to park along a one-way street with a total of $n$ spots. Each car has a preferred spot, described by a \emph{preference function} $\pi:[n]\to[n]$, or equivalently, a \emph{preference list} $(\pi(1), \dots, \pi(n))$ of length $n$, where the $i$th car prefers the $\pi(i)$th spot. The cars attempt to park along the one-way street as follows. Starting with the first car, each car successively enters the street and goes to its preferred spot. If that spot is empty, the car parks there; otherwise it proceeds down the street and parks in the first empty spot available. If it reaches the $n$th spot and finds nowhere to park, it leaves without parking.

Some choices of $\pi$ lead to all $n$ cars parking. For example, any bijection $\pi:[n]\to[n]$ lets each car park in its preferred spot. Also, if every car prefers the first spot (if $\pi$ is constant at $1$) each $i$th car parks in the $i$th spot. However, not all functions $\pi$ allow all cars to park. If no car prefers the first spot, then the first spot will never be parked in and not all $n$ cars can park in the remaining $n-1$ spots. The \emph{parking functions} on $n$ cars are those $\pi : [n] \to [n]$  which allow all cars to park.

Parking functions are well-studied; we suggest \cite{yan-survey-2015} as a comprehensive introduction to their properties. For instance, they have been enumerated and characterized precisely. There are $(n + 1)^{n - 1}$ parking functions on $n$ cars. They are exactly the functions $[n] \to [n]$ satisfying the following \emph{Catalan condition}. A function $\pi : [n] \to [n]$ is a parking function if and only if $\#\pi^{-1}([i]) \ge i$ for each $i \in [n]$. That is, a function is a parking function if at least $i$ cars prefer to park in one of the first $i$ spots. This property can be used to define parking functions.

There are many, many possible variants of the problem of enumerating and analyzing parking functions, some of which have gained traction in recent years. The interested reader is encouraged to peruse \cite{carlson-harris-2020} for an engaging overview of such variants. In many cases, these variants seem sufficiently different from classical parking functions that the developed body of theory cannot be applied immediately. We might ask:
\begin{enumerate}
	\item What if the inequality in the Catalan condition is required to be strict? One obtains the building blocks of parking functions. These are \emph{prime parking functions}, of independent interest. See \cite{meyles-2023} for an excellent overview of their properties.
 	\item What if there are fewer cars than spots? Which functions minimize the number of cars unable to park? (This is a special case of the variants considered in \cite{cameron-johannsen-prellberg-schweitzer-2008}.)
	\item What if each parking spot can accommodate more than one car? (Perhaps, as in \cite{blake-konheim-1977}, our parking spots are hash buckets and our cars are data, or as in \cite{carlson-harris-2020}, our parking spots are clown cars between which different clowns have different preferences.)
\end{enumerate}
We will characterize and enumerate these by placing each in natural bijection with our new notion of \emph{$S$-restricted parking functions} for a specific choices of $S \subseteq [n]$.

\begin{definition}
    Given a subset of allowed preferences $S \subseteq [n]$, an \emph{$S$-restricted parking function} on $n$ cars is a function $\pi : [n] \to S$ which is also a parking function. 

    We denote the set of $S$-restricted parking functions by $\mathrm{PF}_{n \mid S}$ and their number by $\# \mathrm{PF}_{n \mid S}$.
\end{definition}

These reinterpretations yield new combinatorial insights, simpler proofs, more complete theories of parking procedures, and connections to deep results in combinatorics. In some sense, $S$-restricted parking functions provide a paradigm for solving parking questions that strikes the balance between the generality of $\mathbf{u}$-parking functions (as defined in \cite{yan-survey-2015} or \cref{sec:connections}) and the combinatorial intuition of the original parking functions. \todo[jayden]{should we put theorem statements here?}

\subsection{An aside on the Catalan condition}

The Catalan condition can also be written in the language of order-preserving parking functions. This will be useful in proofs.

For each parking function, there is a unique \emph{order-preserving rearrangement}. It is a rearrangement $\pi^{\uparrow} = \pi \circ \sigma$ (for some permutation $\sigma$) such that $\pi^\uparrow(i)\le \pi^\uparrow(j)$ if and only if $i\le j$. The corresponding list $(\pi^{\uparrow}(1), \dots, \pi^{\uparrow}(n))$ is non-decreasing, and for this reason $\pi^{\uparrow}$ is called a \emph{non-decreasing parking function}. 

The Catalan condition can equivalently be stated: $\pi$ is a parking function if and only if $\pi^\uparrow(i) \le i$ for all $i \in [n]$. The equivalent conditions are called Catalan because the non-decreasing parking functions are counted by Catalan numbers and are in bijection with Catalan objects like Dyck paths.

\section{Prime parking functions} \label{primesection}

Prime parking functions are an important restricted class of parking functions. In analogy with prime numbers, they are the indecomposable parts from which all parking functions are built. They are often defined as satisfying a stricter Catalan condition than parking functions:

\begin{definition}
	A \emph{prime parking function} on $n$ cars is a function $\pi : [n] \to [n],$ such that for all $i\in[n-1]$ the size of $\pi^{-1}([i])$ is greater than $i$. Equivalently, it is a function whose order-preserving rearrangement satisfies $ \pi^{\uparrow}(i) < i$ for each $1<i\le n$. 

	We denote the set of all prime parking functions on $n$ cars by $\mathrm{PPF}_{n}$ and their number by $\# \mathrm{P P F}_{n}$.
\end{definition}

A prime parking function, then, describes the preferences of $n$ cars on a street with $n$ cars such that for all $i < n$, more than $i$ cars prefer one of the first $i$ spots. This means that for each parking spot (except the last), there is a car that attempts to park in the spot after it is occupied.

Prime parking functions were first introduced by Gessel in private correspondences mentioned both in Stanley’s Enumerative combinatorics \cite{stanley-fomin-1999} and in Kalikow’s thesis \cite{kalikow-1999}. Recently, prime parking functions have served as a useful tool to study unit interval parking functions \cite{meyles-2023} and have been generalized to different kinds of parking functions \cite{armon-2024}.

By analogy to normal $S$-restricted parking functions, we may consider $S$-restricted prime parking functions as well:

\begin{definition}
    An \emph{$S$-restricted prime parking function} on $n$ cars is a prime parking function on $n$ cars with image in $S$.

    We denote the set of all $S$-restricted prime parking functions on $n$ cars by $\mathrm{PPF}_{n \mid S}$ and their number by $\# \mathrm{P P F}_{n \mid S}$.
\end{definition}

Prime parking functions themselves have a characterization in terms of $S$-restricted parking functions: prime parking functions are in bijection with parking functions where no car prefers the second spot. This bijection is given by moving the preference of each ``extra car'' preferring one of the first $i$ spots one spot over.

\begin{restatable}{theorem}{primeIsRes}
    \label{thm:primeIsRes}
	There is a bijection between prime parking functions and $[n] \setminus \{ 2 \}$-restricted parking functions (on $n$ cars). 
\end{restatable}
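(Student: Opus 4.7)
The plan is to build the bijection hinted at in the paragraph preceding the theorem: informally, every value in a prime parking function that is at least $2$ gets shifted up by one. So I would define $\Phi : \mathrm{PPF}_n \to \mathrm{PF}_{n\mid [n]\setminus\{2\}}$ by
\[
\Phi(\pi)(j) = \begin{cases} 1 & \text{if } \pi(j) = 1,\\ \pi(j) + 1 & \text{if } \pi(j) \ge 2,\end{cases}
\]
and a candidate inverse $\Psi$ that subtracts $1$ from every value at least $3$ (and fixes $1$'s). The mutual inversion $\Phi\circ\Psi = \mathrm{id}$ and $\Psi\circ\Phi = \mathrm{id}$ is then immediate from the pointwise formulas.

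Next I would verify $\Phi(\pi)$ actually lies in $\mathrm{PF}_{n\mid[n]\setminus\{2\}}$. The value $2$ is avoided by construction, and the upper bound is the crucial boundary case: from the strict Catalan condition at $i=n-1$ we have $|\pi^{-1}([n-1])| > n-1$, so every car in a prime parking function prefers a spot in $[n-1]$, i.e.\ $\pi(j) \le n-1$. Thus $\Phi(\pi)(j) \le n$, so the image sits inside $\{1\}\cup\{3,\dots,n\}$. Because the shift $v\mapsto v$ (if $v=1$) or $v+1$ (otherwise) is weakly increasing in $v$, the same permutation that sorts $\pi$ also sorts $\Phi(\pi)$, so $\Phi(\pi)^\uparrow(i)$ equals $\pi^\uparrow(i)$ or $\pi^\uparrow(i)+1$. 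Using $\pi^\uparrow(1)=1$ and $\pi^\uparrow(i) \le i-1$ for $i>1$, this gives $\Phi(\pi)^\uparrow(i)\le i$ for all $i$, so the Catalan condition of the introduction is satisfied.

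In the reverse direction, I would check that $\Psi(\tau)$ is prime whenever $\tau$ lies in $\mathrm{PF}_{n\mid[n]\setminus\{2\}}$. The image of $\Psi(\tau)$ certainly lies in $[n-1]$, since $\tau$ never takes the value $n+1$. For the strict Catalan inequality $\Psi(\tau)^\uparrow(i) < i$ at each $i>1$, again the shift is monotone so $\Psi(\tau)^\uparrow(i)$ is obtained from $\tau^\uparrow(i)$ by the pointwise rule. Here the avoidance of $2$ is exactly what buys the strict inequality: if $\tau^\uparrow(i)=1$ then $\Psi(\tau)^\uparrow(i)=1 < i$, and if $\tau^\uparrow(i)\ge 3$ then $\Psi(\tau)^\uparrow(i) = \tau^\uparrow(i)-1 \le i-1 < i$.

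I don't expect a real obstacle in this proof; the entire argument is driven by the observation that forbidding the value $2$ in a parking function is numerically equivalent to tightening each inequality $\pi^\uparrow(i)\le i$ to $\pi^\uparrow(i)<i$. The only points worth being careful about are the two boundary checks I flagged above: that $\pi(j) \le n-1$ for prime parking functions (so $\Phi$ does not overshoot $n$), and that the case $\pi^\uparrow(1)=1$ is handled separately since the strict inequality is not required at $i=1$.
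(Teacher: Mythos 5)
Your proof is correct and uses essentially the same bijection as the paper: the map fixing $1$ and shifting every other preference up by one is exactly the paper's pushforward $f_*$ in the proof of the more general \cref{thm:resPrimeIsRes} (of which \cref{thm:primeIsRes} is the special case $S=[n]$, $T=[n]\setminus\{2\}$). The only cosmetic difference is that you verify the Catalan conditions via the order-preserving rearrangement $\pi^\uparrow$ while the paper counts preimages $\#\pi^{-1}([i])$, and you rightly flag the same two boundary checks (that $n$ is never a preference of a prime parking function, and the $i=1$ case) that the paper also handles.
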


This is a corollary of \cref{thm:resPrimeIsRes}, which represents $S$-restricted prime parking functions as $T$-restricted parking functions for some related subset $T$. It is given by the same kind of bijection described above.

\begin{restatable}{theorem}{resPrimeIsRes}
    \label{thm:resPrimeIsRes}
	For any $S\subseteq [n]$ such that $1\in S,$  there is a bijection between $S$-restricted prime parking functions and $T$-restricted parking functions (on $n$ cars) where
	\[
		T = \{ 1 \} \cup \{ i + 1 \mid i \in S, 1 < i < n \}.
	\]
	That is,
	\[
		\# \mathrm{PPF}_{n \mid S} = \# \mathrm{PF}_{n \mid T}.
	\]
\end{restatable}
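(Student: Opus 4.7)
The plan is to exhibit an explicit bijection $\Phi : \mathrm{PPF}_{n \mid S} \to \mathrm{PF}_{n \mid T}$ that fixes any preference equal to $1$ and shifts every other preference up by one. Given $\pi \in \mathrm{PPF}_{n \mid S}$, I will define $\Phi(\pi) = \tau$ pointwise by
\[
    \tau(i) = \begin{cases} 1 & \text{if } \pi(i) = 1, \\ \pi(i) + 1 & \text{if } \pi(i) > 1. \end{cases}
\]
The motivating observation is that $T$ is obtained from $S$ by exactly this shift (with a safe copy of $1$ kept), and that the shift preserves the relative order of preferences.

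First I would check that $\tau$ has image in $T$. The prime condition gives $\pi^{\uparrow}(n) < n$, so in fact $\pi(i) \le n - 1$ for every $i$. Hence whenever $\pi(i) > 1$, the value $\pi(i)$ is some element of $S$ with $1 < \pi(i) < n$, so $\pi(i) + 1 \in T$ by definition; and when $\pi(i) = 1$, we have $\tau(i) = 1 \in T$ since $1 \in S$ guarantees nothing but the default inclusion $1 \in T$. Next I would verify the Catalan condition for $\tau$. The key point is that the shift map $f$ sending $1 \mapsto 1$ and $k \mapsto k+1$ for $k > 1$ is non-decreasing on the relevant domain, so $\tau^{\uparrow}(i) = f(\pi^{\uparrow}(i))$ for every $i$. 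Since $\pi$ is a parking function, $\pi^{\uparrow}(1) = 1$, so $\tau^{\uparrow}(1) = 1$; and since $\pi$ is prime, $\pi^{\uparrow}(i) \le i - 1$ for $i > 1$, so $\tau^{\uparrow}(i) \le i$. Thus $\tau \in \mathrm{PF}_{n \mid T}$.

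For the inverse, I would define $\Psi : \mathrm{PF}_{n \mid T} \to \mathrm{PPF}_{n \mid S}$ by $\Psi(\tau)(i) = 1$ if $\tau(i) = 1$ and $\Psi(\tau)(i) = \tau(i) - 1$ otherwise. The fact making $\Psi$ well-defined with image in $S$ is that $T$ contains no element equal to $2$: indeed any $\tau(i) > 1$ lies in $\{j + 1 \mid j \in S, \ 1 < j < n\}$, so $\tau(i) - 1 \in S$. The same monotone-rearrangement argument then gives $\Psi(\tau)^{\uparrow}(i) < i$ for $i > 1$, showing $\Psi(\tau) \in \mathrm{PPF}_{n \mid S}$. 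Finally, $\Phi$ and $\Psi$ are mutually inverse directly from their pointwise definitions, yielding the claimed equality $\# \mathrm{PPF}_{n \mid S} = \# \mathrm{PF}_{n \mid T}$, and \cref{thm:primeIsRes} will follow immediately by specializing $S = [n]$.

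I do not expect a serious obstacle here; the argument is a careful bookkeeping of boundary conditions. The one place to be attentive is keeping straight why each ingredient in the construction of $T$ is necessary: the prime condition is what keeps the shifted preferences from escaping past $n$, the hypothesis $1 \in S$ is what makes the anchor value $1$ legal on the prime-parking side, and the omission of $2$ from $T$ is what guarantees the inverse map $\Psi$ is well-defined without ambiguity.
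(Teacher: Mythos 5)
Your proposal is correct and is essentially the same bijection as the paper's: the shift map fixing $1$ and sending $x \mapsto x+1$, with the same well-definedness observations (primality keeps $n$ out of the image; $2 \notin T$ makes the inverse land in $S$). The only cosmetic difference is that you verify the Catalan conditions via the order-preserving rearrangement $\pi^{\uparrow}$ while the paper counts preimages $\#\pi^{-1}([i])$ directly.
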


\begin{proof}
	Consider the bijection $f \colon S \setminus \{ n \} \to T$ by $f(1) = 1$ and $f(x) = x + 1$ otherwise. We claim $f$ pushes forward to a bijection from $S$-restricted prime parking functions to $T$-restricted parking functions by $f_{*}\colon\mathrm{PPF}_{n \mid S}\to\mathrm{PF}_{n \mid T}$ defined by $\pi \mapsto f_{*} \pi = f \circ \pi$.

	The pushforward $f_*$ is a well-defined map from functions $\mathrm{PPF}_{n \mid S}$ to functions $[n]\to T$. In particular, no prime parking function $\pi$ can have $n$ in its image (since $\pi$ is prime, at least $n > n - 1$ cars prefer to park in the first $n - 1$ spots). Thus, $\pi$ has image contained in $S \setminus \{ n \}$, and $f_{*} \pi$ then must have image in $f(S \setminus \{ n \}) = T$. 

	Now we show that every $f_{*} \pi$ is a parking function. Since $\pi$ is prime, there are two cars ``$j$'' such that $\pi(j) = 1$. Since $f(1) = 1$, the same two cars have $f^{*} \pi(j) = 1$. Thus, the Catalan condition is satisfied for $i = 1$ (and $i = 2$). Now suppose $i \ge 2$. Since $\pi$ is prime, at least $i$-many cars ``$j$'' satisfy $\pi(j) \le i - 1$. Since $f(x) \le x + 1$, these same $i$-many cars have $f_{*} \pi(j)\le i$. That is, the Catalan condition is satisfied for all $i$ and $f_{*} \pi$ is a parking function. It has image in $T$, so $f_{*} \pi \in \mathrm{PF}_{n\mid T}$.

	The function $f\colon S\backslash\{n\}\to T$ is invertible, inducing an inverse map to the pushforward. In particular we claim there is a map $f^{*} : \mathrm{PF}_{n \mid T} \to \mathrm{PPF}_{n \mid S}$ by $\psi \mapsto f^{*} \psi = f^{-1} \circ \psi$ and that $f^{*}$ is the inverse to $f_{*}$. $f^{*}$ is a well-defined map sending $\mathrm{PF}_{n \mid T}$ to functions $[n] \to S$ for the same reason that $f_{*}$ is well-defined. Specifically, $\psi \in \mathrm{PF}_{n \mid T}$ has image in $T$, so $f^{*} \psi$ has image in $f^{-1}(T) = S \setminus \{ n \}$. 

	It remains to show that $f^{*} \psi$ is a prime parking function. The argument is similar to that by which $f_{*} \pi$ was shown to be a parking function. Note that $f^{-1}(1) = 1$ and $f^{-1}(x) = x - 1$ for $x \neq 1$. Since $2 \not \in T$ (to satisfy the Catalan condition when $i = 2$) there are at least two cars ``$j$'' such that $\psi(j) = 1$ and so $f^{*} \psi(j) = 1$. For every $1 < i < n$, there are at least $(i + 1)$-many cars ``$j$'' such that $\psi(j) \le i + 1$ and thus, more than $i$-many cars such that $f^{*}\psi(j) \le i$. Therefore, the strict Catalan condition is satisfied for all $i$ and $f^{*} \psi \in \mathrm{PPF}_{n\mid S}$.

	Now we can compose $f_{*}$ and $f^{*}$ to see that $f^{*}(f_{*} \pi) = f^{-1} \circ f \circ \pi = \pi$ and similarly, $f_{*}(f^{*} \psi) = f \circ f^{-1} \circ \psi = \psi$. Thus, $f_*:\mathrm{PPF}_{n\mid S}\to\mathrm{PF}_{n\mid T}$ is a bijection and $\# \mathrm{PPF}_{n \mid S} = \# \mathrm{PF}_{n \mid T}$.
\end{proof}

\section{Initial segment restrictions}
 
One variant on classical parking functions is when the number of cars differs from the number of spots to park in. \cite{cameron-johannsen-prellberg-schweitzer-2008} studies this idea in many variations. We are interested in the case where there are more cars than parking spots. Not all cars will be able to park, but we can try to minimize the \textit{defect} of the function --- the number of cars which are unable to park. This problem of minimizing defect for preference functions of $n$ cars on $s$ spots is equivalent to finding $S$-restricted parking functions for a particular $S \subseteq [n]$.

\begin{theorem}
    Given $1\le s\le n$, the preference functions $\psi : [n] \to [s]$ with the minimum possible defect $n - s$ are in bijection with the $[s]$-restricted parking functions $\mathrm{PF}_{n \mid [s]}$ (where $[s] = \{ 1, 2, \dots, s \}$).
\end{theorem}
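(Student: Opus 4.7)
The plan is to show that the two sets described in the claim actually coincide as subsets of the preference functions $\psi : [n] \to [s]$, making the purported bijection the identity map. First I would observe that every preference function $\psi : [n] \to [s]$ has defect at least $n - s$, since at most $s$ cars can occupy the $s$ available spots, with equality if and only if every spot in $[s]$ is filled by the end of the parking process.

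Next I would reduce the defining condition of $\mathrm{PF}_{n \mid [s]}$. Because $\psi$ has image in $[s]$, we have $\#\psi^{-1}([i]) = n \ge i$ for every $i \ge s$, so the Catalan conditions at those indices are automatic. Thus $\psi \in \mathrm{PF}_{n \mid [s]}$ if and only if $\psi^\uparrow(i) \le i$ for each $i \in [s]$. The easy direction of the desired equivalence is then immediate: if the spots $[s]$ are all filled, then for each $i \in [s]$ the filled spots $1, \ldots, i$ must hold $i$ distinct cars each with preference in $[i]$ (since cars only move forward), so $\#\psi^{-1}([i]) \ge i$.

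For the reverse direction I would extend the street from $s$ spots to $n$ spots. The Catalan condition then holds at every $i \in [n]$, so $\psi$ is an ordinary parking function on this longer street; all $n$ cars park, and $[n]$ is fully occupied, in particular $[s]$. To transfer this conclusion back to the $s$-spot street, I would argue by induction on cars (in their given order) that at every step the occupancy pattern of spots $[s]$ is identical on the $n$-spot and $s$-spot streets: a car that parks in some spot $\le s$ on the long street parks in the same spot on the short street, while a car that overflows past $s$ on the long street is simply lost on the short one, and neither outcome disturbs $[s]$. Hence $[s]$ is fully filled on the $s$-spot street as well, giving defect exactly $n - s$. The main care-point is this last transfer step, which rests on the purely local nature of each car's parking decision; once it is established, the two sets are visibly equal and the bijection is trivial.
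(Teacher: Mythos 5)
Your proof is correct and takes essentially the same route as the paper's: both identify the two sets via the trivial correspondence between $\psi:[n]\to[s]$ and its composite with the inclusion $[s]\to[n]$, deduce the Catalan condition from the fact that all of $[s]$ is filled in one direction, and deduce that all of $[s]$ fills from the Catalan condition in the other. Your explicit induction showing that the occupancy of the spots in $[s]$ evolves identically on the $s$-spot and $n$-spot streets is a transfer step the paper leaves implicit, so no further comparison is needed.
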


\begin{proof}
	Suppose $\pi$ is an $[s]$-restricted parking function. Then $\pi : [n] \to [n]$ defines a unique function $\psi : [n] \to [s]$ with $\psi(i) = \pi(i)$ for each $i$. We know that performing the parking procedure with preferences defined by $\pi$, and thus also by $\psi$, leads to $s$ cars parking in the first $s$ spots. Thus, under the parking procedure for $\psi$, $s$ cars park in the first $s$ spots and exactly $n - s$ cars are unable to park.

	Suppose $\psi : [n] \to [s]$ has defect $n - s$. $\psi$ defines a unique function $\pi : [n] \to [n]$ by composing with the inclusion $[s] \to [n]$. We claim that $\pi$ satisfies the Catalan condition, and thus, $\pi \in \mathrm{PF}_{n \mid [s]}$. Since $\pi(i) = \psi(i)$ by its definition, this is sufficient to show that composing with the inclusion is the inverse to considering  $\pi \in \mathrm{PF}_{n \mid [s]}$ as a defect-minimizing function $[n] \to [s]$, and completes the proof of the bijection.

	Since all of the first $s$ spots are filled, at least $i$ cars prefer the first $i$ spots for all $i \le s$ (this can be seen explicitly by the fact that $\pi$ ``contains a parking function'' when restricted to the $T$ cars parking in the first $s$ spots). Since all cars park in the first $[s]$ spots, for each $i > s$ there are $n$ (and thus, at least $i$) cars preferring to park in the first $i$ spots.
\end{proof}

We can ask ``how many $[s]$-restricted parking functions are there?'' This is answered in \cite{cameron-johannsen-prellberg-schweitzer-2008} through a generating function argument (Definition 5, Theorem 6). They also provide an alternate combinatorial proof which leads to a recursive expression rather than an explicit one. With a modification, we present a combinatorial argument which gives an explicit expression for the number of $[s]$-restricted parking functions.

\begin{restatable}{theorem}{resPFcount1}
    \label{thm:resPFcount1}
    Let $1\le s\le n.$ Then the number of $[s]$-restricted parking functions on $n$ cars is
    \[\#\mathrm{PF}_{n\mid [s]}=s^{n} - \sum_{i = 0}^{s - 1} \binom{n}{i} (i + 1)^{i - 1} (s - i - 1)^{n - i}.\]
\end{restatable}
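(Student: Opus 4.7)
The plan is to count the functions $\pi \colon [n] \to [s]$ that \emph{fail} to be parking functions, and subtract from the total $s^n$. Since $\pi$ has image in $[s]$, the Catalan condition at indices $i > s$ is automatic, so $\pi$ fails to be an $[s]$-restricted parking function precisely when the parking procedure leaves some spot in $[s]$ empty. I will stratify such $\pi$ by the index $i \in \{0, 1, \ldots, s-1\}$ with the property that $i+1$ is the \emph{first} empty spot.

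For each such $i$, I claim that the non-parking functions with first empty spot $i+1$ are in natural bijection with triples $(A, \rho, \phi)$, where $A \subseteq [n]$ has size $i$, $\rho \colon A \to [i]$ is a parking function (after identifying $A$ with $[i]$ order-preservingly), and $\phi \colon [n] \setminus A \to \{i+2, \ldots, s\}$ is arbitrary. In the forward direction one sets $A = \pi^{-1}([i])$. The key observation is that a car's trajectory moves only to the right, so cars preferring spots $\ge i+2$ never visit any spot in $[i+1]$. Thus spots $[i]$ are filled exclusively by cars in $A$; since spot $i+1$ is empty, no car in $A$ can overflow past $[i]$, forcing $|A| = i$ and making $\pi|_A$ a parking function on the $i$ spots of $[i]$. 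Moreover no car prefers spot $i+1$ itself (else it would park there), which gives $\phi$. The reverse direction is immediate: combining $\rho$ and $\phi$ yields a $\pi$ whose $A$-cars fill $[i]$ while the remaining cars stay strictly to the right of spot $i+1$, making $i+1$ the first empty spot.

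Counting the triples gives $\binom{n}{i}$ choices of $A$, $(i+1)^{i-1}$ parking functions $\rho$ (the classical count on $i$ cars, with the convention $(0+1)^{0-1} = 1$ handling the $i = 0$ case), and $(s-i-1)^{n-i}$ functions $\phi$. Summing over $i = 0, \ldots, s-1$ and subtracting from $s^n$ yields the claimed formula. The main obstacle is the careful forward-direction argument: justifying that $\#\pi^{-1}([i]) = i$ \emph{exactly} (not just $\le i$) and that $\pi^{-1}(\{i+1\}) = \emptyset$. Both rely on the minimality of $i+1$ as the first empty spot together with the rightward monotonicity of the parking procedure, but they are easy to overlook; once secured, the rest is routine bookkeeping.
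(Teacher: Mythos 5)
Your proposal is correct and follows essentially the same route as the paper: both count the non-parking functions $[n]\to[s]$ by stratifying on the first unoccupied spot $i+1$, observe that exactly $i$ cars must form a parking function on the first $i$ spots while the remaining $n-i$ cars prefer spots in $\{i+2,\dots,s\}$, and subtract the resulting sum from $s^n$. Your write-up is somewhat more careful than the paper's in justifying that $\#\pi^{-1}([i])=i$ exactly and that no car prefers spot $i+1$, but the decomposition and the count $\binom{n}{i}(i+1)^{i-1}(s-i-1)^{n-i}$ are identical.
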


\begin{proof}
    We count all non-parking functions $[n]\to[s]$ by the location of the first unoccupied spot. There are $s^{n}$ functions $[n]\to[s],$ which we view as preference lists. If $\pi\in[s]^n$ is not a parking function, then there must be an unoccupied spot somewhere among the $n$ spots; all such unoccupied spots lie in $[s],$ since the last $n-s$ spots are always filled by the minimum-defect argument above.

	Let $i$ be the number of spots before the first unoccupied spot. Then there must be $i$ cars that form a parking function on those first $i$ spots. Choose those $i$ cars in one of $\binom{n}{i}$ ways, and one of the $(i + 1)^{i - 1}$ parking functions of length $i$. None of the remaining $n - i$ cars can prefer any of the first $i + 1$ spots; otherwise at least $i+1$ cars have preferences in the first $i+1$ spots, so at least one of them will be forced to proceed to spot $i+1$. However, they all can prefer any of the remaining $s - i - 1$ spots in one of $(s - i - 1)^{n - i}$ ways. This makes for $\binom{n}{i} (i + 1)^{i - 1}(s - i - 1)^{n - i}$ preference lists in $[s]^{n}$ so that the first unoccupied spot is $i + 1$. Subtracting the sum over all possible $i$ from the number of total preference lists gives the desired equality.
\end{proof}

This expression is one half of a sum given by Abel's binomial theorem, as noted in \cite{cameron-johannsen-prellberg-schweitzer-2008} (we expand on Abel's binomial theorem in \cref{sec:abelsbin}). They suggest there should be a different count corresponding to the other half of the sum. We present a novel combinatorial proof of this count by an involution argument.

\begin{restatable}{theorem}{resPFcount2}
    \label{thm:resPFcount2}
    The number of $[s]$-restricted parking functions on $n$ cars may also be expressed as
    \[\#\mathrm{PF}_{n|[s]}=\sum_{i = s}^{n} \binom{n}{i} (i + 1)^{i - 1} (s - i - 1)^{n - i}.\]
\end{restatable}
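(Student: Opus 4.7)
By \cref{thm:resPFcount1}, the claim is equivalent to the polynomial identity
\[
s^n = \sum_{i=0}^{n}\binom{n}{i}(i+1)^{i-1}(s-i-1)^{n-i}.
\]
Write $T_i := \binom{n}{i}(i+1)^{i-1}(s-i-1)^{n-i}$. Then \cref{thm:resPFcount1} gives $\#\mathrm{PF}_{n\mid[s]}=s^n-\sum_{i=0}^{s-1}T_i$, while the present claim asserts $\#\mathrm{PF}_{n\mid[s]}=\sum_{i=s}^{n}T_i$; equating the two rearranges to the identity above. This identity is the specialization of Abel's binomial theorem $(x+y)^n=\sum_{k}\binom{n}{k}\,x(x+kz)^{k-1}(y-kz)^{n-k}$ at $x=1$, $y=s-1$, $z=1$, so the task reduces to a combinatorial proof of this Abel identity via a sign-reversing involution.

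The plan is to interpret each term $T_i$ as a signed count of triples $(A,p,q)$ where $A\subseteq[n]$ has size $i$, $p$ is a parking function on the cars in $A$ using spots $\{1,\ldots,i\}$, and $q$ assigns each of the $n-i$ remaining cars a value in a signed set of size $|s-i-1|$. For $i<s$, this set is $\{i+2,\ldots,s\}$ with sign $+1$, so the triples enumerate preference functions $[n]\to[s]$ whose parking procedure first stalls at spot $i+1$, exactly as in the proof of \cref{thm:resPFcount1}. For $i\geq s$, rewriting $(s-i-1)^{n-i}=(-1)^{n-i}(i-s+1)^{n-i}$ suggests an abstract $(i-s+1)$-element ``phantom'' set of $q$-labels, with sign $(-1)^{n-i}$. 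A sign-reversing involution on these triples should cancel everything outside a fixed-point set of size $s^n$, which in turn matches preference functions $[n]\to[s]$.

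The involution I would pursue toggles $|A|$ by one: given $(A,p,q)$, locate a canonical car in $[n]\setminus A$ whose $q$-label can be ``absorbed'' into a new top parking slot, moving it into $A$ and extending $p$ into a parking function on $|A|+1$ spots; conversely, locate a canonical top car of $A$ whose removal yields a valid demotion into $[n]\setminus A$ with an adjusted $q$-label. Such a swap flips $(-1)^{n-|A|}$, so paired triples cancel, and the surviving fixed points should biject with preference functions via $\pi|_A = p$ and $\pi|_{[n]\setminus A}=q$. The main obstacle will be formalizing the swap so that promotion and demotion are genuine inverses, and so that the ``real spot'' interpretation of $q$ (for $i<s$) meshes seamlessly with the ``phantom label'' interpretation (for $i\geq s$) across the crossover at $i=s-1$. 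I expect this to require a canonical tie-breaker based on the order-preserving rearrangement $p^{\uparrow}$ together with lex-minimality on candidate cars in $[n]\setminus A$, extending the bookkeeping already used in the proof of \cref{thm:resPFcount1}.
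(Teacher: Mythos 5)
Your reduction to the identity $s^n=\sum_{i=0}^{n}\binom{n}{i}(i+1)^{i-1}(s-i-1)^{n-i}$ is sound, and your instinct to prove it by a sign-reversing involution that toggles the size of the parking-function part by one is exactly the right family of idea --- it is what the paper does. But the proposal stops where the proof begins: the involution is never defined, and you flag its key properties (that promotion and demotion are genuine inverses, and that the fixed points are what you want) as open obstacles. Those are the entire content of the theorem. The missing ingredients, concretely, are: (i) the canonical car to move is the \emph{first car preferring the maximum preferred spot} $m=\max\pi([n])$, i.e.\ $x=\min(\pi^{-1}(m))$; (ii) promotion works because appending any preference in $[i+1]$ to a parking function of length $i$ yields a parking function of length $i+1$ (park the new car last; then invoke permutation-invariance); (iii) demotion works because deleting a maximum preference preserves the Catalan condition, and the demoted car's label $m$ lands in $[i]\setminus[s]$ precisely when $m>s$; (iv) the two operations are mutually inverse because recoloring does not change $m$ or $x$; (v) the fixed points are exactly the configurations with $m\le s$, which forces zero red cars and hence is a copy of $\mathrm{PF}_{n\mid[s]}$ counted once with sign $+1$. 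Without (i)--(v) there is no proof.

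The detour through \cref{thm:resPFcount1} also manufactures the ``crossover'' difficulty you worry about. For $i<s$ every term $T_i$ is positive, so no sign-reversing involution can cancel those terms against one another; your single involution over all $i$ would have to leave all of them as fixed points, which just re-runs the proof of \cref{thm:resPFcount1} and contributes nothing new. The paper avoids this entirely: it interprets only the terms with $i\ge s$, writing $(s-i-1)^{n-i}=(-1)^{n-i}(i+1-s)^{n-i}$ and realizing the $(i+1-s)$-element ``phantom'' set concretely as the forbidden spots $[i+1]\setminus[s]$, so that each signed object is an honest $2$-colored parking function of length $n$. The involution then lives entirely in the regime $i\ge s$, the $i=s-1$ level never arises, and the surviving fixed points are exactly the $[s]$-restricted parking functions. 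I recommend dropping the reduction to the full Abel identity and proving the stated sum directly in this way.
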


\begin{proof}
    Our goal here will be to take all parking functions on $n$ cars and eliminate those with cars that prefer spots in $[n] \setminus [s]$. We do so by means of an alternating sum that counts every parking function positively and negatively an equal number of times, \textit{unless} its preferences lie entirely within $[s].$

	We consider 2-colored parking functions --- that is, parking functions with cars assigned one of two colors --- with a particular restriction upon them.  Suppose we choose a subset of size $i \ge s$ of our cars to be colored indigo. These cars are to form a parking function on the first $i$ parking spots (though the parking outcome may not result in them actually parking there). The remaining $n - i$ cars are colored red and can prefer any of the $i + 1 - s$ ``forbidden'' spots in  $[i + 1] \setminus [s]$.
    

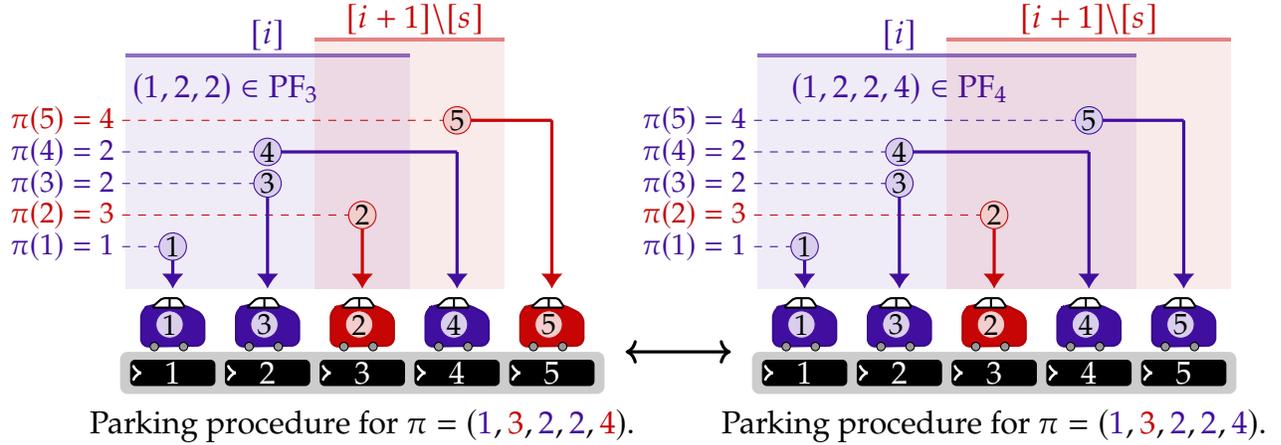
\begin{figure}[h]
\begin{center}
\begin{tikzpicture}[scale=0.28]
    \def\gap{5};
    \def\mid{2};
    \def\carsep{4.5};
    \def\vertsep{1.5};
    \definecolor{blue}{RGB}{65, 15, 160}
    \definecolor{red}{RGB}{200, 5, 5}
    
    \begin{scope}
    \filldraw[fill = blue, fill opacity = .08, draw=none] (-.25,3) rectangle (\carsep*3-.25, \gap+\vertsep*6);
    \draw[text = blue](\carsep, \gap+\vertsep*5) node {$(1,2,2) \in \mathrm{PF}_3$};
    \filldraw[fill = blue, fill opacity = .6, draw=none] (-.25,\gap+\vertsep*6 + .15) rectangle (\carsep*3-.25, \gap+\vertsep*6);
    \draw[text = blue](\carsep + \mid, \gap+\vertsep*6.7) node {$[i]$};

    \filldraw[fill = red, fill opacity = .08, draw=none] (\carsep*2-.25,3) rectangle (\carsep*4-.25, \gap+\vertsep*6.5);
    \filldraw[fill = red, fill opacity = .5, draw=none] (\carsep*2-.25,\gap+\vertsep*6.5 + .15) rectangle (\carsep*4-.25, \gap+\vertsep*6.5);
    \draw[text = red](\carsep*3, \gap+\vertsep*7.2) node {$[i+1]\backslash[s]$};
    
    \draw[dashed,blue] (-.4, \gap + \vertsep*0) -- ++(\mid+\carsep*0,0); 
    \draw[dashed,red ] (-.4, \gap + \vertsep*1) -- ++(\mid+\carsep*2,0); 
    \draw[dashed,blue] (-.4, \gap + \vertsep*2) -- ++(\mid+\carsep*1,0); 
    \draw[dashed,blue] (-.4, \gap + \vertsep*3) -- ++(\mid+\carsep*1,0); 
    \draw[dashed,red ] (-.4, \gap + \vertsep*4) -- ++(\mid+\carsep*3,0); 

    \draw[-{Latex[length=2mm,width=3mm]}, very thick, blue] (\mid+\carsep*0, \gap+\vertsep*0) -- (\mid+\carsep*0 ,3);
    \draw[-{Latex[length=2mm,width=3mm]}, very thick, red]  (\mid+\carsep*2, \gap+\vertsep*1) -- (\mid+\carsep*2 ,3);
    \draw[-{Latex[length=2mm,width=3mm]}, very thick, blue] (\mid+\carsep*1, \gap+\vertsep*2) -- (\mid+\carsep*1 ,3);
    \draw[-{Latex[length=2mm,width=3mm]}, very thick, blue] (\mid+\carsep*3, \gap+\vertsep*3) -- (\mid+\carsep*3 ,3);
    \draw[-{Latex[length=2mm,width=3mm]}, very thick, red]  (\mid+\carsep*4, \gap+\vertsep*4) -- (\mid+\carsep*4 ,3);

    \draw[very thick, blue] (\mid+\carsep*1, \gap + \vertsep*3) -- ++(\carsep*2,0);
    \draw[very thick, red]  (\mid+\carsep*3, \gap + \vertsep*4) -- ++(\carsep*1,0);
    
    \tikpref[(2+\carsep*0, \gap + \vertsep*0)]{blue}{$1$} 
    \tikpref[(2+\carsep*2, \gap + \vertsep*1)]{red} {$2$} 
    \tikpref[(2+\carsep*1, \gap + \vertsep*2)]{blue}{$3$} 
    \tikpref[(2+\carsep*1, \gap + \vertsep*3)]{blue}{$4$} 
    \tikpref[(2+\carsep*3, \gap + \vertsep*4)]{red} {$5$}

    \mzncar[(\carsep*0,0)]{blue}{$1$} 
    \mzncar[(\carsep*1,0)]{blue}{$3$} 
    \mzncar[(\carsep*2,0)]{red}{$2$} 
    \mzncar[(\carsep*3,0)]{blue}{$4$} 
    \mzncar[(\carsep*4,0)]{red}{$5$}

    \draw[fill = black, fill opacity = .2, draw = none, rounded corners = .5 ex] (-.5,0) rectangle (\carsep*5, -2);
    
    \tikspot[(\carsep*0,0)]{$1$}
    \tikspot[(\carsep*1,0)]{$2$}
    \tikspot[(\carsep*2,0)]{$3$}
    \tikspot[(\carsep*3,0)]{$4$}
    \tikspot[(\carsep*4,0)]{$5$}

    \draw (-.25,\gap + \vertsep*0) node[left,blue]{\small$\pi(1) = 1$};
    \draw (-.25,\gap + \vertsep*1) node[left,red ]{\small$\pi(2) = 3$};
    \draw (-.25,\gap + \vertsep*2) node[left,blue]{\small$\pi(3) = 2$};
    \draw (-.25,\gap + \vertsep*3) node[left,blue]{\small$\pi(4) = 2$};
    \draw (-.25,\gap + \vertsep*4) node[left,red ]{\small$\pi(5) = 4$};


    \draw (\carsep*2 + \mid, -3.5) node {Parking procedure for $\pi = (\textcolor{blue}{1}, \textcolor{red}{3}, \textcolor{blue}{2}, \textcolor{blue}{2}, \textcolor{red}{4})$.};
    \end{scope}

    \draw[<->, very thick] (\carsep*5+1.5-.5, 0) -- (30 - 1.5, 0);

    \begin{scope}[shift = {(30,0)}]
    \filldraw[fill = blue, fill opacity = .08, draw=none] (-.25,3) rectangle (\carsep*4-.25, \gap+\vertsep*6);
    \draw[text = blue](\carsep + \mid, \gap+\vertsep*5) node {$(1,2,2,4) \in \mathrm{PF}_4$};
    \filldraw[fill = blue, fill opacity = .6, draw=none] (-.25,\gap+\vertsep*6 + .15) rectangle (\carsep*4-.25, \gap+\vertsep*6);
    \draw[text = blue](\carsep + \mid, \gap+\vertsep*6.7) node {$[i]$};

    \filldraw[fill = red, fill opacity = .08, draw=none] (\carsep*2-.25,3) rectangle (\carsep*5-.25, \gap+\vertsep*6.5);
    \filldraw[fill = red, fill opacity = .5, draw=none] (\carsep*2-.25,\gap+\vertsep*6.5 + .15) rectangle (\carsep*5-.25, \gap+\vertsep*6.5);
    \draw[text = red](\carsep*3+\mid, \gap+\vertsep*7.2) node {$[i+1]\backslash[s]$};
    
    \draw[dashed,blue] (-0.4, \gap + \vertsep*0) -- ++(\mid+\carsep*0,0); 
    \draw[dashed,red ] (-0.4, \gap + \vertsep*1) -- ++(\mid+\carsep*2,0); 
    \draw[dashed,blue] (-0.4, \gap + \vertsep*2) -- ++(\mid+\carsep*1,0); 
    \draw[dashed,blue] (-0.4, \gap + \vertsep*3) -- ++(\mid+\carsep*1,0); 
    \draw[dashed,blue] (-0.4, \gap + \vertsep*4) -- ++(\mid+\carsep*3,0); 

    \draw[-{Latex[length=2mm,width=3mm]}, very thick, blue] (\mid+\carsep*0, \gap+\vertsep*0) -- (\mid+\carsep*0 ,3);
    \draw[-{Latex[length=2mm,width=3mm]}, very thick, red]  (\mid+\carsep*2, \gap+\vertsep*1) -- (\mid+\carsep*2 ,3);
    \draw[-{Latex[length=2mm,width=3mm]}, very thick, blue] (\mid+\carsep*1, \gap+\vertsep*2) -- (\mid+\carsep*1 ,3);
    \draw[-{Latex[length=2mm,width=3mm]}, very thick, blue] (\mid+\carsep*3, \gap+\vertsep*3) -- (\mid+\carsep*3 ,3);
    \draw[-{Latex[length=2mm,width=3mm]}, very thick, blue]  (\mid+\carsep*4, \gap+\vertsep*4) -- (\mid+\carsep*4 ,3);

    \draw[very thick, blue] (\mid+\carsep*1, \gap + \vertsep*3) -- ++(\carsep*2,0);
    \draw[very thick, blue]  (\mid+\carsep*3, \gap + \vertsep*4) -- ++(\carsep*1,0);
    
    \tikpref[(2+\carsep*0, \gap + \vertsep*0)]{blue}{$1$} 
    \tikpref[(2+\carsep*2, \gap + \vertsep*1)]{red} {$2$} 
    \tikpref[(2+\carsep*1, \gap + \vertsep*2)]{blue}{$3$} 
    \tikpref[(2+\carsep*1, \gap + \vertsep*3)]{blue}{$4$} 
    \tikpref[(2+\carsep*3, \gap + \vertsep*4)]{blue} {$5$}

    \mzncar[(\carsep*0,0)]{blue}{$1$} 
    \mzncar[(\carsep*1,0)]{blue}{$3$} 
    \mzncar[(\carsep*2,0)]{red}{$2$} 
    \mzncar[(\carsep*3,0)]{blue}{$4$} 
    \mzncar[(\carsep*4,0)]{blue}{$5$}

    \draw[fill = black, fill opacity = .2, draw = none, rounded corners = .5 ex] (-.5,0) rectangle (\carsep*5, -2);
    
    \tikspot[(\carsep*0,0)]{$1$}
    \tikspot[(\carsep*1,0)]{$2$}
    \tikspot[(\carsep*2,0)]{$3$}
    \tikspot[(\carsep*3,0)]{$4$}
    \tikspot[(\carsep*4,0)]{$5$}

    \draw (-.25,\gap + \vertsep*0) node[left,blue]{\small$\pi(1) = 1$};
    \draw (-.25,\gap + \vertsep*1) node[left,red ]{\small$\pi(2) = 3$};
    \draw (-.25,\gap + \vertsep*2) node[left,blue]{\small$\pi(3) = 2$};
    \draw (-.25,\gap + \vertsep*3) node[left,blue]{\small$\pi(4) = 2$};
    \draw (-.25,\gap + \vertsep*4) node[left,blue]{\small$\pi(5) = 4$};


    \draw (\carsep*2 + \mid, -3.5) node {Parking procedure for $\pi = (\textcolor{blue}{1}, \textcolor{red}{3}, \textcolor{blue}{2}, \textcolor{blue}{2}, \textcolor{blue}{4})$.};
    \end{scope}
\end{tikzpicture}
\caption{For $n = 5, s = 2$, two colorings of $(1,3,2,2,4)$ with $i = 3$ and $i = 4$ are connected through the sign reversing involution by recoloring the car with the largest preference. Note that both colorings satisfy the annotated constraints of the proof below.}
\end{center}
\label{fig:recoloring}
\end{figure}

    These preferences always form a parking function --- the Catalan condition is satisfied up to spot $i$ by the indigo cars since they form a parking function, and then the red cars all prefer one of the first $i + 1$ spots, satisfying the condition from $i+1$ onward.
    
    We count these parking functions positively when there are an even number of red cars and negatively otherwise. This count is
	\[
		\sum_{i = s}^{n} \binom{n}{i} (i + 1)^{i - 1} (i + 1 - s)^{n - i} (-1)^{n - i}.
	\]
	This is equal to the sum above; note that consecutive terms have opposite sign.

	We now create a sign-reversing involution on these 2-colored parking functions (Figure 1). If $m$ is the maximum preference among all the cars, let car $x$ be the first car that prefers spot $m$ (this choice is arbitrary, but one such car must be specified). That is, for a parking function $\pi$, let \[x = \min(\pi^{-1}(m)),\] where $m=\max(\pi([n]))$. Note that $m > s$ as long as there exist any red cars, since red cars all prefer spots greater than $s$. For our involution, we recolor this car, making it red if it was indigo and vice versa. 

	If car $x$ was red, then  $m\in [i + 1]$ by definition. Consider the same parking function with car $x$ painted indigo. Adding any preference in $[i + 1]$ to a parking function of length $i$ will form a parking function of length $i + 1$. To see this, consider adding the preference at the end. The first $i$ cars will park in the first $i$ spots (on which they form a parking function), and then the added car will park in the last spot. Since parking functions are permutation invariant, the list will form a parking function no matter where we add the car. Thus, with car $x$, the indigo cars form a parking function of length $i + 1$.
    
    Since the red cars with car $x$ formed a subset of $[i + 1] \setminus [s]$, without car $x$ they certainly form a subset of the larger $[i + 2] \setminus [s]$. Both conditions are thus satisfied for one direction of the involution.

	If car $x$ was indigo, then $m \in [i]$ (since the $i$ indigo cars form a parking function). Consider the same parking function with car $x$ painted red. The remaining indigo cars will still form a parking function on $i - 1$ cars: the $j$th smallest preference will still be at most $j$ after removing the maximum preference, so the Catalan condition is still satisfied.
    
    Since $m \in [i]$ was the maximum preference, all red cars' preferences will be in $[i].$ No old red cars will be in $[s],$ since all were contained in $[i+1]\backslash [s].$ The new red car is the only thing to worry about: all red cars will have preferences in $[i]\backslash [s],$ and therefore all conditions will be satisfied for the involution, unless $m\in [s].$

	This is our exception --- we cannot recolor to have a new red car if even the maximum preference is in $[s]$. This corresponds to the $[s]$-restricted parking functions of length $n$. They are only counted once --- positively, as a subset of the term with 0 red cars. Any other pair of 2-colored parking functions is counted positively once and negatively once in our sum, for an even and odd number of red cars respectively. Thus, the sum counts only $\#\mathrm{PF}_{n \mid [s]}$.
\end{proof}

\subsection{\texorpdfstring{$[s]$-r}{R}estricted prime parking functions}

There is an analogous result for prime parking functions as well. Recall that prime parking functions $\pi:[n]\to[n]$ satisfy an extra-strong Catalan condition --- that more than $i$ cars prefer one of the first in $i$ spots for each $i \le n$. We will use the fact that their total count is $(n-1)^{n-1}$. Again, we refer the reader to \cite{armon-2024} for a more detailed discussion.

We can enumerate $[s]$-restricted prime parking functions (defined in \cref{primesection} as those prime parking functions that are also $[s]$-restricted) in much the same manner as we did normal restricted parking functions:

\begin{theorem}
    \label{thm:resPPFcount1}
    For $1 \le s < n,$ the number of $[s]$-restricted prime parking functions on $n$ cars is 
    \[\#\mathrm{PPF}_{n|[s]}=s^{n} - (s - 1)^{n} - \sum_{i = 1}^{s} \binom{n}{i} (i - 1)^{i - 1} (s - i)^{n - i}.\]
\end{theorem}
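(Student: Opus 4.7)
The plan is to mirror the strategy of \cref{thm:resPFcount1}, counting the complement of $\mathrm{PPF}_{n\mid[s]}$ in $[s]^n$ by classifying each $\pi\in[s]^n$ that is not a prime parking function according to its \emph{first strict failure}---the smallest $i\in\{1,\dots,n-1\}$ with $\#\pi^{-1}([i])\le i$. Since $\pi$ maps into $[s]$ and $n>s$, we have $\#\pi^{-1}([i])=n\ge i+1$ whenever $i\ge s$, so any first strict failure must lie in $\{1,\dots,s-1\}$.

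The heart of the argument will be the analysis of first strict failure at some $i\ge 2$. The non-failure conditions $\#\pi^{-1}([j])\ge j+1$ for $j<i$, together with $\#\pi^{-1}([i-1])\le \#\pi^{-1}([i])\le i$, pin down $\#\pi^{-1}([i-1])=i$ and $\#\pi^{-1}(\{i\})=0$. So such a $\pi$ is determined by: (i) the $\binom{n}{i}$ choices of which $i$ cars have preferences in $[i-1]$; (ii) the restriction of $\pi$ to those cars, which is a function $[i]\to[i-1]$ satisfying the strict Catalan condition, i.e., a prime parking function on $i$ cars (the condition $\pi^{\uparrow}(i)\le i-1$ forces the image into $[i-1]$ automatically, as noted in \cref{primesection}), contributing $\#\mathrm{PPF}_i=(i-1)^{i-1}$ options; and (iii) the preferences of the remaining $n-i$ cars, which must lie in $\{i+1,\dots,s\}$, giving $(s-i)^{n-i}$ options. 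Multiplying yields $\binom{n}{i}(i-1)^{i-1}(s-i)^{n-i}$ per such $i$.

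Finally I will handle the $i=1$ case, which splits into two subcases. When $\#\pi^{-1}(\{1\})=0$ (no car prefers spot $1$), the remaining $n$ cars freely pick from $\{2,\dots,s\}$, contributing the standalone $(s-1)^n$ term. When $\#\pi^{-1}(\{1\})=1$, the $n(s-1)^{n-1}$ contribution can be written $\binom{n}{1}\cdot 0^0\cdot(s-1)^{n-1}$ under the convention $0^0=1$, matching the $i=1$ summand of the formula. The $i=s$ summand contains $0^{n-s}=0$ and can be freely appended. Summing everything and subtracting from $s^n$ yields the stated identity. The crucial step to get right is the pin-down argument for $i\ge 2$ that identifies the middle factor as $\#\mathrm{PPF}_i$; the rest is arithmetic bookkeeping.
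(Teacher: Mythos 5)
Your proposal is correct and follows essentially the same route as the paper: complementary counting in $[s]^n$ by the location of the first violation of the strict Catalan condition, with the block of $i$ cars preferring $[i-1]$ contributing $\#\mathrm{PPF}_i=(i-1)^{i-1}$, the standalone $(s-1)^n$ covering the no-car-prefers-spot-$1$ case, and the vacuous $i=s$ term absorbed by $0^{n-s}=0$. Your version is if anything slightly more careful than the paper's in pinning down $\#\pi^{-1}([i-1])=i$ and $\#\pi^{-1}(\{i\})=0$ and in handling the $i=1$ and $i=s$ edge cases explicitly.
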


\begin{proof}
	Our goal is again to take all $s^n$ functions with restricted codomain and remove those which are not prime parking functions. A prime parking function must have its 1st, 2nd, 3rd, 4th, etc. smallest preferences be at most $1,1,2,3,\ldots$; we do casework on the first counterexample. We treat the first case separately: there are $(s-1)^n$ ways to have one's smallest preference greater than 1, as we must choose preferences from $2$ to $s$ for each car.
    
    Now we can assume that some $i\in [s-1]$ is the smallest value such that the $i+1$th value is greater than $i$. Since more than $j$ cars prefer the first $j$ spots for all $j < i+1$, the $i$ cars that prefer the first $i$ spots form a prime parking function on $[i]$ in one of $(i - 1)^{i - 1}$ ways. The remaining $n - i$ cars can prefer any of the remaining $s - i$ spots in $[s] \setminus [i]$ in one of $(s - i)^{n - i}$ ways. With $\binom{n}{i}$ ways to choose the $i$ cars, there are $\binom{n}{i} (i - 1)^{i - 1} (s - i)^{n - i}$ preference lists in $[s]^{n}$ so that the first counterexample is $i+1$.

    This means that subtracting off all possible non-prime parking functions gives the desired equality.
\end{proof}

Also similarly to classical parking functions, we can enumerate $[s]$-restricted prime parking functions by involution. This appears to give a different result, but we will see in \cref{sec:abelsbin} that it is equivalent by Abel's binomial theorem.

\begin{theorem}
    \label{thm:resPPFcount2}
    For $1 \le s < n,$ the number of $[s]$-restricted prime parking functions on $n$ cars may also be expressed as
    \[\#\mathrm{PPF}_{n|[s]}=\sum_{i = s + 1}^{n} \binom{n}{i} (i - 1)^{i - 1} (s - i)^{n - i}.\]
\end{theorem}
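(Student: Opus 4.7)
The plan is to mimic the sign-reversing involution from the proof of Theorem~\ref{thm:resPFcount2}, but with prime parking functions in place of parking functions. First I would rewrite $(s-i)^{n-i} = (-1)^{n-i}(i-s)^{n-i}$ to read the sum as a signed count of 2-colored configurations: for each $s+1 \le i \le n$, choose $i$ ``indigo'' cars whose preferences form a prime parking function on $[i]$, and let each of the remaining $n-i$ ``red'' cars prefer one of the $i-s$ spots in $[i]\setminus[s]$; weight the configuration by $(-1)^{n-i}$. A short check shows each such configuration is itself a prime parking function on $[n]$ with image in $[i]$: the indigo cars alone satisfy the strict Catalan inequality on $[j]$ for $j < i$, and for $j \ge i$ all $n$ preferences lie in $[i] \subseteq [j]$.

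The involution is the natural analogue of the one used for $\mathrm{PF}_{n\mid[s]}$: let $m$ be the overall maximum preference, let $x$ be the first car with $\pi(x) = m$, and flip $x$'s color. Two structural facts about prime parking functions will do the work. First, adding a preference $p \le i$ to a prime parking function on $[i]$ produces a prime parking function on $[i+1]$, since the old function already has $|\pi^{-1}([i-1])| \ge i$. Second, removing a car whose preference is the overall maximum from a prime parking function on $[i]$ produces a prime parking function on $[i-1]$, since each entry of the order-preserving rearrangement can only weakly decrease. If $x$ is red, then $m \in [i]$ and the first fact applies; if $x$ is indigo, then since red preferences all exceed $s$, the max indigo preference $m \le i-1$ is the overall max, and the second fact applies --- the newly-red $x$ lands in $[i-1]\setminus[s]$ exactly when $m > s$.

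The fixed points of this involution are precisely the configurations with $m \le s$, which forces no red cars to exist (as any red car has preference in $[i]\setminus[s]$). Hence $i = n$ and the configuration is a prime parking function on $[n]$ with image in $[s]$ --- an element of $\mathrm{PPF}_{n\mid[s]}$, contributing $+1$ to the sum. The main obstacle I anticipate is checking the boundary cases carefully: when $i = s+1$ with red cars present, the indigo max is at most $s$, so $x$ must be red with $m = s+1$ and $i' = s+2 \le n$; when $i = n$, there are no red cars, so $x$ is indigo with $m \le n-1$, and if $m > s$ then $i' = n-1 \ge s+1$. Once these endpoint checks are in place, the involution pairs every non-fixed-point configuration with one of opposite sign, leaving the signed sum equal to $\#\mathrm{PPF}_{n\mid[s]}$.
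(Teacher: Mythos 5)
Your proposal is correct and follows essentially the same route as the paper's proof: the identical two-coloring (indigo cars forming a prime parking function on $[i]$, red cars preferring $[i]\setminus[s]$), the same sign-reversing involution that recolors the first car attaining the maximum preference, and the same fixed-point analysis identifying $\mathrm{PPF}_{n\mid[s]}$ as the sole surviving contribution. Your explicit endpoint checks at $i=s+1$ and $i=n$ are a welcome addition of care rather than a deviation.
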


\begin{proof}
    We proceed similarly to \cref{thm:resPFcount2}, with modifications to account for prime parking functions.
    
    We will now count 2-colored prime parking functions. The $i$ indigo cars are to form a prime parking function on $[i]$; the remaining $n - i$ cars are colored red and can prefer any of the $i - s$ forbidden spots in  $[i] \setminus [s]$. These preferences always form a prime parking function --- the extra-strong Catalan condition is satisfied up to spot $i-1$ by the indigo cars since they form a prime parking function, and then the red cars all prefer one of the first $i$ spots, satisfying the condition from $i$ onwards. We count these parking functions positively with an even number of red cars and negatively otherwise. This count is
	\[
		\sum_{i = s}^{n} \binom{n}{i} (i - 1)^{i - 1} (i - s)^{n - i} (-1)^{n - i};
	\]
	this is equal to the sum above, noting that the $i=s$ term becomes 0.

	We use the same involution as before: recolor car $x,$ the first car that has the maximum preference $m.$ Note that $m > s$ as long as there are some red cars, since the red cars all prefer spots greater than $s$.

	If car $x$ is painted red, then  $m \in [i]$ just by definition. We can reconstruct the same parking function with car $x$ painted indigo. We can add any preference in $[i]$ to a prime parking function of length $i$ and the list will form a prime parking function of length $i + 1,$ by the same argument as above.
    
    Since the red cars with car $x$ formed a subset of $[i] \setminus [s]$, without car $x$ they also form a subset of the larger $[i + 1] \setminus [s]$. Thus all conditions are satisfied and our involution works in one direction.

	If car $x$ is painted indigo, then $m\in [i-1]$ for the indigo cars to form a prime parking function on $i$ cars. Thus, we can reconstruct the same parking function with car $x$ repainted red. The remaining indigo cars will still form a prime parking function on $i - 1$ cars --- the $j$th smallest preference will still be less than $j$ after removing the maximum preference.
    
    Since $m\in [i-1]$ was the maximum preference, all red cars will be in $[i-1]$. No old red cars will be in $[s],$ since all were contained in $[i]\backslash [s].$ The new red car is the only thing to worry about: all red cars will have preferences in $[i-1]\backslash [s],$ and therefore all conditions will be satisfied for the involution, unless $m\in [s].$

	Once again, then, we have a single exception to the involution: prime parking functions where all preferences are in $[s]$ are counted once while everything else cancels. Thus our alternating sum counts $\mathrm{PPF}_{n \mid [s]}$.
\end{proof}

\subsection{Abel's binomial theorem} \label{sec:abelsbin}

In enumerating $[s]$-restricted parking functions and their prime variants, we demonstrated two different counts for each. One was obtained by excluding all of the restricted preference lists that aren't parking functions and another by canceling all parking functions with forbidden preferences. The expressions we get from these two approaches are very similar. This isn't a coincidence! These pairs of counts are actually halves of Abel's binomial theorem.

Abel's binomial theorem is a generalization of the standard binomial theorem, with deep roots in the theory of reluctant functions and forests (see \cite{shapiro-1991} and \cite{pitman-2002} for more on these connections). The identity has many equivalent statements. One common form similar to those in \cite{shapiro-1991} and \cite{zucker-2024} is
\[
	(X + Y)^{n} = \sum_{k = 0}^{n} \binom{n}{k} X (X + kZ)^{k - 1} (Y - k Z)^{n - k}
\]
for $X, Y, Z \in \mathbb{C}$ and $n \in \mathbb{Z}_{> 0}$. The substitutions $x = X/Z$ and $y = Y/Z - n$ (for suitable $Z$) yield an equivalent form that is particularly amenable to combinatorial interpretation.

\begin{restatable}[Abel's binomial theorem]{theorem}{abelsbin}
	For all $n\in\mathbb{Z}_{>0}$ and $x,y\in\mathbb{C},$
    \[
		(x + y + n)^{n} = \sum_{i = 0}^{n} \binom{n}{i} x (x + i)^{i - 1} (y + n - i)^{n - i}.
	\]
\end{restatable}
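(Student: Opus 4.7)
My plan is to derive Abel's theorem from the two enumerations of $\#\mathrm{PF}_{n\mid[s]}$ given in Theorems \ref{thm:resPFcount1} and \ref{thm:resPFcount2}. Equating them and observing that the $i = s-1$ term of the first sum vanishes (since $(s-i-1)^{n-i} = 0^{n-s+1} = 0$), I obtain the single identity
\[
s^n = \sum_{i=0}^n \binom{n}{i}(i+1)^{i-1}(s-i-1)^{n-i}
\]
valid for every integer $s$ with $1 \le s \le n$. Under the substitution $s = y + n + 1$ this is precisely the right-hand side of Abel's binomial theorem with $x = 1$, holding at the integer points $y = -n, \ldots, -1$.

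Both sides of the displayed identity are polynomials in $s$ of degree $n$, so to lift it to a polynomial identity in $s$ it suffices to verify it at one additional value. At $s = 0$ the right-hand side becomes $\sum_{i=0}^n \binom{n}{i}(-1)^{n-i}(i+1)^{n-1}$, which is the $n$-th forward difference of the polynomial $g(i) = (i+1)^{n-1}$ of degree $n-1$, and hence vanishes, matching $0^n$. Thus Abel's theorem at $x = 1$ holds for every $y \in \mathbb{C}$. Running the same strategy on Theorems \ref{thm:resPPFcount1} and \ref{thm:resPPFcount2} yields $s^n - (s-1)^n = \sum_{i=1}^n \binom{n}{i}(i-1)^{i-1}(s-i)^{n-i}$, which under $s = y + n$ is Abel's theorem at $x = -1$.

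The remaining task is to promote these specializations to all $x \in \mathbb{C}$. Fix $y$; both sides of Abel's are polynomials in $x$ of degree $n$ with common leading coefficient $1$, so their difference has degree at most $n - 1$ and it suffices to verify the identity at $n$ additional values of $x$. I would obtain these by iterating the combinatorial argument with $k - 1$ ghost cars constrained to fill the leftmost $k - 1$ spots, which effectively replaces each Abel-polynomial factor $(i+1)^{i-1}$ in the parallel enumeration by $k(k+i)^{i-1}$. This yields Abel's theorem at $x = k$ for every positive integer $k$, and polynomial interpolation in $x$ completes the proof for all $x, y \in \mathbb{C}$.

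The main obstacle is this final extension step. The cases $x = \pm 1$ fall out essentially for free once both forms of $\#\mathrm{PF}_{n\mid[s]}$ (and of $\#\mathrm{PPF}_{n\mid[s]}$) are in hand, but extending the sign-reversing involution of Theorem \ref{thm:resPFcount2} to the ghost-car setting requires care: the involution's key step selects ``the first car preferring the maximum spot,'' and one must verify that ghost cars are neither selected nor inadvertently recolored, so that the resulting alternating sum cleanly produces Abel's polynomial $k(k+i)^{i-1}$ with exactly the right weight at each value of $k$.
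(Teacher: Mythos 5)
Your derivation of the two specializations is correct: equating \cref{thm:resPFcount1} with \cref{thm:resPFcount2} (and \cref{thm:resPPFcount1} with \cref{thm:resPPFcount2}) gives Abel's identity at $x=1$ and $x=-1$ for $s\in[n]$, and your interpolation in $s$ is sound --- indeed your extra check at $s=0$ is not even needed, since both sides are \emph{monic} of degree $n$ in $s$, so agreement at the $n$ points $s=1,\dots,n$ already forces equality. (The observation that the $i=s-1$ term vanishes plays no role in combining the two sums.) This much coincides with the paper's own preliminary discussion in \cref{sec:abelsbin}, which the paper explicitly labels as ``merely special cases.''

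The genuine gap is the final step, and you have correctly identified it yourself: everything hinges on producing Abel's identity at $n$ distinct values of $x$, and your ``ghost car'' construction for $x=k$ is only a sketch. You would need to (i) define the enlarged objects precisely, (ii) redo the first-gap decomposition of \cref{thm:resPFcount1} to show it yields $k(k+i)^{i-1}$ in place of $(i+1)^{i-1}$, and (iii) check that the sign-reversing involution of \cref{thm:resPFcount2} still pairs objects correctly without ever selecting or recoloring a ghost car --- none of which is carried out. The paper closes this gap by a cleaner device: it refines both counts by the \emph{number-of-ones} statistic, i.e.\ it computes the generating function $\sum_{\pi\in\mathrm{PF}_{n\mid[s]}}x^{\#\pi^{-1}(\{1\})}$ in two ways (\cref{thm:res-1s-enumerator}), using Yan's formula $x(x+n)^{n-1}$ for the enumerator of ordinary parking functions. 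The key point making this work is that the involution only ever recolors the car with maximal preference $m>s\ge 1$, so it preserves the number of cars preferring spot $1$; hence the identity $(s-1+x)^n=\sum_{i=0}^n\binom{n}{i}x(x+i)^{i-1}(s-i-1)^{n-i}$ holds for \emph{all} $x\in\mathbb{C}$ at once, and only the interpolation in $s$ (which you already have) is required. Your ghost-car idea is essentially the specialization of this enumerator at positive integers $x=k$; making it rigorous amounts to proving the paper's \cref{thm:res-1s-enumerator}, so you should prove the refined enumerator identity directly rather than one integer $k$ at a time.
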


Since the expressions in \cref{thm:resPFcount1} and \cref{thm:resPFcount2} both count $[s]$-restricted parking functions on $n$ cars, we can set them equal. Rearranging and combining sums gives
\begin{align*}
    s^{n} &= \sum_{i = 0}^{s - 1} \binom{n}{i} (i + 1)^{i - 1} (s - i - 1)^{n - i} + \sum_{i = s}^{n} \binom{n}{i} (i + 1)^{i - 1} (s - i - 1)^{n - i}\\
    &=\sum_{i = 0}^{n} \binom{n}{i} (i + 1)^{i - 1} (s - i - 1)^{n - i},
\end{align*}
which is Abel's theorem for $x=1,$ $y=s-n-1.$ Similarly, from \cref{thm:resPPFcount1} and \cref{thm:resPPFcount2} we get 
\begin{align*}
    - (s - 1)^{n} &= -s^n + \sum_{i = 1}^{s} \binom{n}{i} (i - 1)^{i - 1} (s - i)^{n - i} + \sum_{i = s + 1}^{n} \binom{n}{i} (i - 1)^{i - 1} (s - i)^{n - i}\\
    &=\sum_{i=0}^n \binom{n}{i} (i - 1)^{i - 1} (s - i)^{n - i},
\end{align*}
which is Abel's theorem for $x=-1$ and $y=s-n+1$. This relationship is very surprising, especially given one half of the sum is alternating and the other isn't!

These are merely special cases, but we can actually recover the complete form of the identity from parking functions. To do so, we'll need a result found in \cite{yan-survey-2015}, about the number-of-ones enumerator.

\begin{definition}
	The \emph{number-of-ones enumerator} for parking functions on $n$ cars is the generating function for the number of parking functions with $i$ cars preferring the first spot.
	\[
		\sum_{\pi \in \mathrm{PF}_{n}} x^{\# \pi^{-1}(\{ 1 \})} = \sum_{i = 1}^{n} c_{i} x^{i}
	\]
	where $c_{i} = \# \{ \pi \in \mathrm{PF}_{n} \mid \# \pi^{-1}(\{ 1 \}) = i \}$.
\end{definition}

\begin{lemma}
    For each $n\in\mathbb{Z}_{>0}$, the number-of-ones enumerator for parking functions of length $n$ is $x(x + n)^{n - 1}$.
\end{lemma}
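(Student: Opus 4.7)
The plan is to prove the equivalent coefficient identity $c_i = \binom{n-1}{i-1} n^{n-i}$ for each $i \in [n]$, then assemble the generating function via the binomial theorem.

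First I would use permutation-invariance of the parking function property to choose which $i$ of the $n$ cars prefer spot $1$ in $\binom{n}{i}$ ways. Fix such a choice; the remaining $n - i$ cars have preferences in $\{2, 3, \ldots, n\}$. Since the $i$ cars preferring spot $1$ occupy spots $1, 2, \ldots, i$ during the parking procedure, the Catalan condition for the full $\pi$ reduces to the statement that the shifted preferences $q_j := \pi(j) - 1 \in [n-1]$ of the remaining $n - i$ cars form a preference list in which all $n - i$ cars successfully park on a linear street of $n - 1$ spots.

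Next I would count such preference lists $(q_1, \ldots, q_{n-i}) \in [n-1]^{n-i}$ using a circular parking argument in the spirit of Pollak. Enlarge the street to a circle of $n$ spots and allow preferences in $[n]$; each sequence in $[n]^{n-i}$ then leaves exactly $n - (n-i) = i$ spots empty after parking. The rotation $q_j \mapsto q_j + 1 \pmod n$ is a bijection on sequences that also rotates the set of empty spots, so by symmetry each spot lies in the empty set for the same fraction of sequences. Hence the number of sequences leaving spot $n$ empty is $\tfrac{i}{n} \cdot n^{n-i} = i \cdot n^{n-i-1}$. A sequence leaves spot $n$ empty exactly when no car ever tries spot $n$, which forces every preference to lie in $[n-1]$ and every car to park on the linear street $[n-1]$, matching our count.

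Combining, $c_i = \binom{n}{i} \cdot i \cdot n^{n-i-1} = \binom{n-1}{i-1} n^{n-i}$. Summing,
\[
f_n(x) = \sum_{i = 1}^{n} \binom{n-1}{i-1} n^{n-i} x^i = x \sum_{j = 0}^{n-1} \binom{n-1}{j} n^{n-1-j} x^j = x(x + n)^{n-1}.
\]
The main obstacle is carefully justifying the equivalence in the second step between ``spot $n$ empty under circular parking'' and ``all $n - i$ cars park on the linear street $[n-1]$ with preferences in $[n-1]$''; this requires verifying that the first car to try spot $n$ (whether via direct preference or clockwise overflow from a preference in $[n-1]$) would necessarily park there, so the empty-spot-$n$ regime rules out any such attempt and therefore coincides with the linear parking condition we need.
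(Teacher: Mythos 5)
Your proof is correct, but it takes a genuinely different route from the paper: the paper does not prove this lemma itself, instead deferring to Yan's survey, where the identity is obtained via a bijection between parking functions and labelled trees. Your argument is elementary and self-contained. You reduce the coefficient $c_i$ to counting sequences in $[n-1]^{n-i}$ under which $n-i$ cars all park on a linear street of $n-1$ spots, then evaluate that count by Pollak's circular-symmetry trick, giving $c_i = \binom{n}{i}\, i\, n^{n-i-1} = \binom{n-1}{i-1} n^{n-i}$ and hence $x(x+n)^{n-1}$. Both key steps check out: the Catalan condition $\#\pi^{-1}([k]) \ge k$ together with $\#\pi^{-1}(\{1\}) = i$ is exactly the condition $\#\{j : q_j \le k-1\} \ge k-i$, which is the all-cars-park criterion for $n-i$ cars on $n-1$ spots; and since occupied spots never vacate, spot $n$ is empty at the end of the circular procedure iff no car ever reaches it, which is equivalent to all preferences lying in $[n-1]$ and the linear procedure succeeding. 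One quibble: your stated justification that ``the $i$ cars preferring spot $1$ occupy spots $1,\dots,i$'' is not literally true (for $\pi = (2,1,1)$ the cars preferring spot $1$ end up in spots $1$ and $3$); the reduction should be justified by the preimage count just described rather than by tracking where those cars park. This does not affect validity. The cited tree bijection buys a structural correspondence useful for other statistics, while your approach buys a short, purely enumerative proof in the same spirit as the circular argument the paper already deploys in its section on modular restrictions.
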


Yan demonstrates a clever construction involving a bijection with labeled trees to prove this. We can use this result to prove something analogous for the corresponding enumerator for $[s]$-restricted parking functions. 

\begin{definition}
	The \emph{$[s]$-restricted number-of-ones enumerator} for parking functions on $n$ cars is the generating function for the number of parking functions with $i$ cars preferring the first spot.
	\[
		\sum_{\pi \in \mathrm{PF}_{n \mid [s]}} x^{\# \pi^{-1}(\{ 1 \})} = \sum_{i = 1}^{n} c_{i} x^{i}
	\]
	where $c_{i} = \# \{ \pi \in \mathrm{PF}_{n \mid [s]} \mid \# \pi^{-1}(\{ 1 \}) = i \}$.
\end{definition}

We can obtain a suspiciously Abel-ian pair of expressions for the $[s]$-restricted number-of-ones enumerator.

\begin{theorem}
    \label{thm:res-1s-enumerator}
    The $[s]$-restricted number-of-ones enumerator for parking functions on $n$-cars may be expressed in two forms:
    \begin{align*}
    \sum_{\pi\in\mathrm{PF}_{n|[s]}}x^{\# \pi^{-1}(\{1\})}&=(s-1+x)^{n} - \sum_{i = 0}^{s - 1} \binom{n}{i} x(x + i)^{i - 1} (s - i - 1)^{n - i}\\
    &=\sum_{i = s}^{n} \binom{n}{i} x(x + i)^{i - 1} (s - i - 1)^{n - i}.
    \end{align*}
\end{theorem}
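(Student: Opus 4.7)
The plan is to mirror the two proofs of \cref{thm:resPFcount1} and \cref{thm:resPFcount2}, refining each count by the weight $x^{\# \pi^{-1}(\{1\})}$ and invoking the (unrestricted) number-of-ones enumerator lemma $x(x+i)^{i-1}$ in place of the raw count $(i+1)^{i-1}$ of parking functions of length $i$. The key observation that makes this refinement painless is that in both arguments every ``auxiliary'' car we introduce (the $n-i$ cars preferring $[s]\setminus[i+1]$ in the first proof, or preferring $[i+1]\setminus[s]$ in the involution proof) prefers a spot strictly greater than $1$, so it contributes weight $1$ to the enumerator and all of the $x$-weight is carried by the embedded length-$i$ parking function.

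For the first expression, I would start from the total weighted count of all preference functions $[n]\to[s]$: each car independently contributes $x$ if it prefers spot $1$ and $1$ otherwise, giving $(x+s-1)^n$. I would then subtract the weighted non-parking functions, classified by the location $i+1$ of the first unoccupied spot exactly as in \cref{thm:resPFcount1}. Choosing which $i$ cars park in the first $i$ spots contributes $\binom{n}{i}$; these cars form a parking function of length $i$, whose weighted enumerator is $x(x+i)^{i-1}$ by the lemma; and the remaining $n-i$ cars each choose among the $s-i-1$ spots in $\{i+2,\dots,s\}$, none of which is spot $1$, giving the factor $(s-i-1)^{n-i}$. Summing and subtracting yields
\[
(s-1+x)^{n}-\sum_{i=0}^{s-1}\binom{n}{i}x(x+i)^{i-1}(s-i-1)^{n-i}.
\]

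For the second expression, I would run the sign-reversing involution of \cref{thm:resPFcount2} verbatim on $2$-colored parking functions, but weight each indigo parking function of length $i$ by $x$ to the power of the number of its cars preferring spot $1$. Since $s\ge 1$, every red car prefers a spot in $[i+1]\setminus[s]\subseteq\{2,3,\dots\}$, so the number of cars preferring spot $1$ is determined entirely by the indigo sub-parking function. Crucially, the car $x=\min\pi^{-1}(m)$ that gets recolored has preference $m>s\ge 1$, so recoloring never changes the number of cars preferring spot $1$; hence the involution is weight-preserving and the cancellations go through unchanged. The fixed points are precisely the $[s]$-restricted parking functions, each counted once with its correct weight, and the resulting alternating sum is
\[
\sum_{i=s}^{n}\binom{n}{i}\,x(x+i)^{i-1}\,(i+1-s)^{n-i}(-1)^{n-i}=\sum_{i=s}^{n}\binom{n}{i}x(x+i)^{i-1}(s-i-1)^{n-i}.
\]

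The only real subtlety — and hence the ``main obstacle'' such as it is — is verifying that inserting the weight $x^{\#\pi^{-1}(\{1\})}$ interacts correctly with the involution. Specifically, one must check that (i) the invariance $m>s\ge 1$ guarantees the recolored car never lies in $\pi^{-1}(\{1\})$, so recoloring preserves the weight; and (ii) the lemma $x(x+i)^{i-1}$ applies to the indigo parking function even though it is embedded inside a longer preference list. Both are straightforward, but they are the places where the proof could silently break if $s=0$ or if red cars were allowed to prefer spot $1$, so I would flag them explicitly.
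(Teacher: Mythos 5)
Your proposal is correct and follows essentially the same route as the paper: both expressions are obtained by rerunning the proofs of \cref{thm:resPFcount1} and \cref{thm:resPFcount2} with the weight $x^{\#\pi^{-1}(\{1\})}$, replacing the count $(i+1)^{i-1}$ by the number-of-ones enumerator $x(x+i)^{i-1}$, and observing that all auxiliary cars prefer spots greater than $1$ so the weight lives entirely on the embedded length-$i$ parking function. Your explicit check that the recolored car has preference $m>s\ge 1$, so the involution is weight-preserving, is a detail the paper leaves implicit and is worth stating.
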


\begin{proof}
    We omit most of the details here, as they follow along lines very similar to \cref{thm:resPFcount1} and \cref{thm:resPFcount2}.
    
    For the first claim, $(s-1+x)^n$ is a generating function for all functions restricted to $[s],$ as the number of functions with $k$ 1s in its output is $\binom{n}{k}1^k(s-1)^{n-k}.$ In each term of the sum we're subtracting off, $x(x+i)^{i-1}$  counts parking functions of length $i$; the $(s-i-1)^{n-i}$ represents preferences after the first gap in the street, and therefore it has no 1s to account for.

    The second claim proceeds similarly; $x(x+i)^{i-1}$ again counts parking functions of length $i$. Since $(s-i-1)^{n-i}$ counts red cars, which always prefer forbidden spots greater than $s\ge1,$ we again needn't account for preferences of 1 here. From this point on the proofs are identical.
\end{proof}

The pair of expressions above are actually sufficient to recover all of Abel's binomial theorem.

\begin{proof}[Proof of Abel's binomial theorem]
	Equating the two expressions in the previous theorem gives us that for all $n\in\mathbb{Z}_{>0},$ $s\in [n],$ $x\in\mathbb{C},$
	\[
		(s-1+x)^{n} = \sum_{i = 0}^{n} \binom{n}{i} x(x + i)^{i - 1} (s - i - 1)^{n - i}.
	\]
In particular, viewing both sides of the expression as a polynomial in $s$: for any $n\in\mathbb{Z}_{>0}$ and $x\in\mathbb{C},$ we have two monic polynomials $f(s)$ and $g(s)$ of degree $n$ which agree on $n$ distinct values. They must be equal; $f(s)-g(s)$ is of degree at most $n-1$ and has at least $n$ roots and so must be the zero polynomial.

Thus, the above identity is true for all $x,s\in\mathbb{C}$ and $n\in\mathbb{Z}_{>0}$. Substituting $s=n+1+y$ recovers Abel's binomial theorem.
\end{proof}

Note that this result essentially comes from looking at the ``number-of-ones" statistic on $[s]$-restricted parking functions. Other statistics, such as lucky and area, could be promising as well. See \cite{yan-survey-2015} for an overview.

\subsection{The symmetric group and initial restrictions}

Regular parking functions (on $n$ cars) have certain structure beyond just whether they are parking functions or not. For example, whether a given $\pi : [n] \to [n]$ is a parking function depends only on the size of each of the pre-images $\pi^{-1}([i])$. It doesn't matter which cars prefer spots in each $[i]$. However, the parking function itself also comes with the data of its \emph{parking outcome}, the permutation $[n] \to [n]$ sending each parking spot to the car parked in it, which clearly does depend on which cars prefer which spots. This structure involves the symmetric group $\mathfrak{S}_{n}$ in two different ways.

First, there is an action of the symmetric group on $\mathrm{PF}_{n}$ by shuffling around the preferred spots among the cars. Explicitly, this is the group action $\mathfrak{S}_{n} \times \mathrm{PF}_{n} \to \mathrm{PF}_{n}$ by $\sigma \cdot \pi = \pi \circ \sigma^{-1}$. Counting the orbits of this action is equivalent to counting the ``non-decreasing'' (order-preserving) parking functions. It's known that they are in bijection with Dyck paths and thus, the number of them is the $n$th Catalan number $C_{n}$ (see, for example, \cite{armstrong-loehr-warrington-2016}).

Second, there is a surjection $\mathrm{PF}_{n} \twoheadrightarrow \mathfrak{S}_{n}$ that sends every parking function to its parking outcome. It is interesting to consider the distribution of parking outcomes. In \cite{pinsky-2024}, Pinsky determines the size of the pre-image of a permutation $\sigma \in \mathfrak{S}_{n}$ under the map $\mathrm{PF}_{n} \twoheadrightarrow \mathfrak{S}_{n}$, and analyzes asymptotics as $n \to \infty$.

Both of these facts have analogues for $[s]$-restricted parking functions. 

\begin{theorem}
	Under the described action of $\mathfrak{S}_{n}$ on $\mathrm{PF}_{n \mid [s]}$, the number of orbits is $C(n, s - 1)$ --- the $(n, s - 1)$th entry in Catalan's triangle. \cite[A009766]{oeis}
\end{theorem}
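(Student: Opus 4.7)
The plan is to identify each $\mathfrak{S}_n$-orbit with its unique non-decreasing representative and then transfer the count to a standard Catalan-triangle lattice-path enumeration. Since the action $\sigma \cdot \pi = \pi \circ \sigma^{-1}$ merely reshuffles cars, each orbit in $\mathrm{PF}_{n \mid [s]}$ contains exactly one weakly-increasing representative $\pi^{\uparrow}$. Thus I only need to count non-decreasing sequences $1 \le a_1 \le \cdots \le a_n \le s$ satisfying the Catalan condition $a_i \le i$.

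My next step is to pass to the multiplicity composition $(b_1, \ldots, b_s)$ defined by $b_i = \#\{j : a_j = i\}$. This gives a bijection between the sequences above and weak compositions of $n$ into $s$ parts, and the Catalan condition $a_i \le i$ translates exactly to the partial-sum inequalities $b_1 + b_2 + \cdots + b_i \ge i$ for every $i \in [s]$ (the inequality at $i = s$ degenerates to $n \ge s$, which holds by hypothesis). I would then encode each such composition as an east-north lattice path from $(0, 0)$ to $(n, s - 1)$: for $i = 1, \ldots, s - 1$ take $b_i$ east steps followed by one north step, and then finish with $b_s$ trailing east steps. Since the path only moves up at the north steps, the partial-sum inequalities for $i \le s - 1$ are equivalent to the requirement that the path never crosses strictly above the diagonal $y = x$.

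The final step invokes the reflection principle: the number of east-north lattice paths from $(0, 0)$ to $(n, s - 1)$ staying weakly below $y = x$ equals $\binom{n + s - 1}{s - 1} - \binom{n + s - 1}{s - 2}$, which is precisely the $(n, s - 1)$-entry $C(n, s - 1)$ of Catalan's triangle. The main subtlety I expect is the off-by-one in the endpoint $(n, s - 1)$ rather than $(n, s)$: the multiplicity $b_s$ contributes trailing east steps with no following north step, which is exactly what forces the answer to be $C(n, s-1)$ rather than $C(n, s)$ and recovers the classical count $C_n$ when $s = n$.
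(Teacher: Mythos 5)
Your argument is correct, but it takes a genuinely different route from the paper's. Both proofs begin the same way, by identifying each orbit with its unique non-decreasing representative, and your reduction is sound: the orbit of $\pi$ under $\sigma\cdot\pi = \pi\circ\sigma^{-1}$ is the set of rearrangements of the multiset of preferences, the Catalan condition depends only on that multiset, and passing to the multiplicities $b_i$ correctly turns $a_i\le i$ into the ballot-type inequalities $b_1+\cdots+b_i\ge i$ for $i\in[s]$ (with $i>s$ automatic). Your lattice-path encoding and the reflection count $\binom{n+s-1}{s-1}-\binom{n+s-1}{s-2}$ do equal the entry $C(n,s-1)$ of A009766, and the $s=n$ and $s=1$ specializations check out. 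The paper instead conditions on whether $\pi(n)=s$ or $\pi(n)<s$ to derive the recurrence $\#\operatorname{PF}^{\uparrow}_{n\mid[s]}=\#\operatorname{PF}^{\uparrow}_{n-1\mid[s]}+\#\operatorname{PF}^{\uparrow}_{n\mid[s-1]}$, matches it against the defining recurrence of Catalan's triangle, and verifies initial conditions. Your approach buys an explicit closed form and a direct bijection with ballot paths (close in spirit to the lattice-path bijections the paper attributes to Cai and Yan), at the cost of importing the reflection principle; the paper's approach stays entirely inside parking-function language and makes the appearance of the triangle's recurrence structurally transparent, at the cost of an induction with boundary-condition bookkeeping. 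If you write yours up, the one point to state carefully is the reflection step itself (reflecting the offending prefix across $y=x+1$ to land at $(-1,1)$), since that is the only place where the closed form actually comes from.
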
 

Note, as a corollary, the number of non-decreasing parking functions such that $s$ is the maximal spot preferred is $C(n, s - 1) - C(n, s - 2) = C(n - 1, s - 1)$. In symbols, $\# \{ \pi^{\uparrow} \in \mathrm{PF}^{\uparrow}_{n} \mid \max \pi^{\uparrow}(j) = s, j \in [n] \} = C(n - 1, s - 1)$. This result is stated or proved in other sources. Cai and Yan state the result without proof \cite{cai-yan-2019}. Cai and Yan also suggest bijections with different classes of Dyck paths and lattice paths, classes of forests, and other classes of non-decreasing parking functions. In the preprint \cite{cruz-harris-et-al-2024} the no-peak no-tie parking functions are shown to be in bijection with these parking functions and enumerated. The proof of below is independent of the proof in \cite{cruz-harris-et-al-2024}, and like all things we have shown so far with preference-restricted parking functions, provides novel combinatorial insight.

\begin{proof}
	It will suffice to count one specific representative of each orbit. We can count those representatives such that $\pi$ is order-preserving, called the \emph{non-decreasing $[s]$-restricted parking functions} because they give $\pi(1) \le \dots \le \pi(n)$. These are exactly those $[s]$-restricted parking functions that are non-decreasing (as defined in \cref{sec:intro}). Let $\operatorname{PF}_{n \mid [s]}^{\uparrow}$ be the subset of non-decreasing $[s]$-restricted parking functions.

	Consider any non-decreasing parking function $\pi \in \mathrm{PF}_{n \mid [s]}^{\uparrow}$. If the last car prefers the $s$th spot (if $\pi(n) = s$), then the remaining cars must form a non-decreasing parking function on $n - 1$ cars since they satisfy the required Catalan inequality $\pi(i) \le \min \{ i, s \}$. $\pi$ is determined by this choice in $\mathrm{PF}_{n - 1 \mid [s]}^{\uparrow}$ If the last car does not prefer the $s$th spot (if $\pi(n) < s$), then $\pi$ is just a parking function on $n$ cars restricted to $[s - 1]$ since $\pi(1) \le \dots \pi(n) \le s - 1 < s$. Again, $\pi$ is determined by the choice in $\mathrm{PF}_{n \mid [s - 1]}^{\uparrow}$.

	Any $[s]$-restricted parking function on $n - 1$ cars can be extended to an $[s]$-restricted parking function on $n$ cars by defining $\pi(n) = s$, and any $[s - 1]$-restricted parking function on $n$ cars is obviously also an $[s]$-restricted parking function. Since the two possibilities are mutually exclusive, we have the recurrence relation
	\[
		\# \operatorname{PF}_{n \mid [s]}^{\uparrow} = \# \operatorname{PF}_{n - 1 \mid [s]}^{\uparrow} + \# \operatorname{PF}_{n \mid [s - 1]}^{\uparrow} .
	\]
	Catalan's triangle satisfies the same recurrence relation, as shown in \cite{bailey-1996}:
	\[
		C(n, s - 1) = C(n - 1, s - 1) + C(n, s - 2).
	\]
	It only remains to show that the two sequences satisfy the same initial conditions.

	If the restriction is to $S = [1]$ there is only $1$ parking function $(1, \dots, 1) \in \operatorname{PF}_{n \mid [1]}^{\uparrow}$. Thus $\# \operatorname{PF}_{n \mid [1]}^{\uparrow} = 1 = C(n, 1 - 1)$ for all $n \in \mathbb{Z}_{> 0}$. Finally, we already know $\# \operatorname{PF}_{n \mid [n]}^{\uparrow}  = C_{n}$, the $n$th Catalan number. The Catalan triangle has $C(n, n - 1) = C_{n}$, and thus, we have $\# \operatorname{PF}_{n \mid [n]}^{\uparrow} = C(n, n - 1)$. These initial conditions are also found in \cite{bailey-1996}.

	With the same initial conditions on the diagonal and vertical and the same recurrence relation (giving a number in terms of those above and to its left) for both of the sequences, they must be the same. Thus, the number of $\mathfrak S_{n}$ orbits is $\# \operatorname{PF}_{n \mid [s]}^{\uparrow} = C(n, s - 1)$.
\end{proof}

\begin{theorem}[inspired by \cite{pinsky-2024}]
    The number of $[s]$-restricted parking functions on $n$ cars with parking outcome $\sigma \in \mathfrak{S}_n$ is
    \[
    \prod_{i = 1}^n \max \{ 0, \ell_{n,i}(\sigma)-\max \{ 0, i - s \} \}.
    \]
    Here $\ell_{n, i}$ is the length of the longest contiguous string in $\sigma$ (written in one-line notation as $\sigma_{1} \dots \sigma_{n}$) ending at $\sigma_i$ with maximum value $\sigma_i$ in the string.
\end{theorem}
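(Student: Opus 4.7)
The plan is to adapt Pinsky's car-by-car approach to the $[s]$-restricted setting. Fix a target outcome $\sigma \in \mathfrak{S}_n$ in one-line notation $\sigma_1 \sigma_2 \cdots \sigma_n$, so that car $\sigma_j$ parks in spot $j$. For each $j$, I would determine exactly which preferences $\pi(\sigma_j) = k \in [s]$ are compatible with this outcome and then show that the choices are independent across $j$, so the total count is a product.

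For the forward direction, suppose $\pi$ realizes $\sigma$. Car $\sigma_j$ has some preference $k \le j$, and every spot $i \in \{k, k+1, \dots, j-1\}$ must be occupied at the moment $\sigma_j$ arrives. Since spot $i$ ends up holding $\sigma_i$ and cars arrive in label order, the cars $\sigma_k, \dots, \sigma_{j-1}$ must all precede $\sigma_j$, i.e., $\sigma_i < \sigma_j$ for $k \le i < j$. Equivalently, $\sigma_j = \max(\sigma_k, \dots, \sigma_j)$, so the admissible preferences $k$ for car $\sigma_j$ form exactly the contiguous block $\{j - \ell_{n,j}(\sigma) + 1, \dots, j\}$ of size $\ell_{n,j}(\sigma)$. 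Conversely, given any choice of $k_j$ in this block for each $j$, defining $\pi(\sigma_j) = k_j$ should produce outcome $\sigma$: by induction on arrival order, when $\sigma_j$ arrives the cars $\sigma_{k_j}, \dots, \sigma_{j-1}$ (which all have labels smaller than $\sigma_j$) have already parked in their designated spots, so $\sigma_j$ finds $k_j, \dots, j-1$ full and spot $j$ empty and parks in spot $j$.

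To finish, I would impose the $[s]$-restriction by intersecting each block with $[1, s]$. Since $j - \ell_{n,j}(\sigma) + 1 \ge 1$ holds automatically, the intersection is $\{j - \ell_{n,j}(\sigma) + 1, \dots, \min(j, s)\}$, which has size $\ell_{n,j}(\sigma) - \max\{0, j - s\}$ when this quantity is positive and is empty otherwise. Clipping to the non-negative part gives the per-car count $\max\{0, \ell_{n,j}(\sigma) - \max\{0, j-s\}\}$, and multiplying over $j$ yields the stated formula (note the product is $0$ exactly when some block is empty, correctly recording that $\sigma$ is then not achievable by any $[s]$-restricted $\pi$).

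The main obstacle is the converse half of the independence step: verifying that an arbitrary compatible choice of $k_j$'s really yields $\sigma$. This needs a careful strong induction on arrival index, with two things to confirm at each step --- that the relevant intermediate spots $k_j, \dots, j-1$ are full, and that spot $j$ itself is still empty. The first follows because those cars have smaller labels (by the block condition) and so parked correctly by hypothesis; the second follows because any car preceding $\sigma_j$ has, by hypothesis, landed in its assigned spot $\sigma^{-1}(i) \neq j$. Once this bookkeeping is in place, the rest of the argument is a clean intersection-and-multiply computation.
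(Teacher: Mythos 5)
Your proposal is correct and follows essentially the same route as the paper: determine, car by car, the contiguous block of admissible preferences (of size $\ell_{n,i}(\sigma)$), intersect with $[s]$ to remove the $\max\{0,i-s\}$ forbidden spots, and multiply. You are in fact somewhat more careful than the paper about verifying the converse direction (that every compatible choice of preferences really produces the outcome $\sigma$), which is a welcome addition rather than a deviation.
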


\begin{proof}
	Recall that the parking outcome $\sigma$ sends each parking spot to the car that parks in it. Thus, writing $\sigma$ in one-line notation $\sigma_{1} \dots \sigma_{n}$ is just writing the car numbers in order of which parking spot they end up in.

	First consider the case of parking outcomes $\sigma$ of regular parking functions. In order for car $\sigma_{i}$ to end up in parking spot $i$, it is necessary and sufficient that it prefers a spot $j \le i$, such that all the spots between $j$ and $i$ are already occupied by the time it tries to park. Spots $j, j + 1, \dots, i$ need to be occupied by cars with lower numbers than $\sigma_{i}$ --- the string $\sigma_{j} \dots \sigma_{i}$ must have maximum $\sigma_{i}$. The longest such string contains all the parking spots that a car $\sigma_{i}$ could prefer to cause the given parking outcome. Thus, the number of parking functions with the given parking outcome is just the product of all $\ell_{n, i}$.

	However, in the case of $[s]$-restricted parking functions, when car $\sigma_{i}$ ends up in spot $i$ outside the first $s$ spots, it couldn't have preferred any of the $i - s$ spots $s + 1, s + 2, \dots, i$. Thus, to obtain the number of possible preferred spots, we subtract $\max \{ 0, i - s \}$ from the length of the longest string ending in in $\sigma_{i}$ with maximum value $\sigma_{i}$. If $\ell_{m, i}(\sigma) - \max \{ 0, i - s \}$ is negative, then $\sigma_{s}, \sigma_{s + 1}, \dots, \sigma_{n}$ is not in increasing order. This can only happen if some car $\sigma_{j}$ has a preferred spot in the last $n - s$ spots. Any such parking outcome $\sigma$ does not result from an $[s]$-restricted parking function. Thus, the number of possible preferred spots of $\sigma_{i}$ is actually $\max \{ 0, \ell_{n, i}(\sigma) - \max \{ 0, i - s \} \}$, and the number of $[s]$-restricted parking functions inducing parking outcome $\sigma$ is the product of all these terms.
\end{proof}
 
\section{Modular restrictions \label{modularsection}}

In \cite{blake-konheim-1977}, Blake and Konheim consider a variant of the parking problem applicable to hash buckets. In the language of cars and parking spots (they think about balls and buckets) they consider the problem of parking cars in a street, where instead of a parking spots each fitting a single car, there are rows which can each fit $g\ge 1$ cars, except for the last row, which potentially fits fewer. Here, cars have preferences for rows; they attempt to park in the first empty spot of their preferred row, and then each subsequent row until they find one that is not full. Using machinery from complex analysis and generating functions, Blake and Konheim obtain an enumeration of the ``parking functions'' --- functions describing preferences that allow every car to park --- under this modified parking procedure (\cite{blake-konheim-1977} Corollary 2.1). The same setup is described engagingly in \cite{carlson-harris-2020}.

We obtain the same enumeration by elementary means that provide an explicit generalization to the case of $gs - k$ spots as well. Instead of considering $gs - 1$ parking spots organized in $s$ rows of $g$ parking spots each (with $g - 1$ in the last row), we imagine placing these rows one after each other to form a long one-way street and only permit cars to prefer the first spot in each row. This parking procedure is exactly equivalent but can be understood as a restricted parking function. By adding $k$ spots to make our street a circular one with $gs$ spots, we can use the circular symmetry of the new setup to enumerate parking functions, in the same vein as Pollak's enumeration of parking functions (recounted by Riordan in \cite{riordan-1969}).

\begin{figure}[h]
    \centering
    \begin{tikzpicture}[scale=0.28]
    \def\gap{5};
    \def\mid{2};
    \def\carsep{4.5};
    \def\vertsep{2};
    \def\rowsep{1.5};
    \definecolor{amber}{rgb}{1.0, 0.75, 0.0}
    \definecolor{darkblue}{RGB}{37, 51, 186}
    \definecolor{darkgreen}{RGB}{37, 186, 98}
    \def\rowac{amber!70!black};
    \def\rowbc{darkgreen};
    \def\rowcc{darkblue};

    \draw[fill=\rowac,fill opacity=.2,draw=none] (-.5,-2) rectangle (\carsep*3, \gap+\vertsep*7);
    \draw[fill=\rowac,fill opacity=.8,draw=none] (-.5,\gap+\vertsep*7+.25) rectangle (\carsep*3, \gap+\vertsep*7);
    \draw (\carsep + \mid, \gap+\vertsep*7+.25) node[above, text = \rowac]{\textbf{Row 1}};
    \draw[fill=\rowbc,fill opacity=.2,draw=none] (\carsep*3+\rowsep-.5,-2) rectangle (\carsep*6 + \rowsep, \gap+\vertsep*7);
    \draw[fill=\rowbc,fill opacity=.8,draw=none] (\carsep*3+\rowsep-.5,\gap+\vertsep*7+.25) rectangle (\carsep*6 + \rowsep, \gap+\vertsep*7);
    \draw (\carsep*4 + \rowsep + \mid, \gap+\vertsep*7+.25) node[above, text = \rowbc]{\textbf{Row 2}};
    \draw[fill=\rowcc,fill opacity=.2,draw=none] (\carsep*6+\rowsep*2-.5,-2) rectangle (\carsep*7+\rowsep*2, \gap+\vertsep*7);
    \draw[fill=\rowcc,fill opacity=.8,draw=none] (\carsep*6+\rowsep*2-.5,\gap+\vertsep*7+.25) rectangle (\carsep*7+\rowsep*2, \gap+\vertsep*7);
    \draw (\carsep*6 + 2*\rowsep + \mid, \gap+\vertsep*7+.25) node[above, text = \rowcc]{\textbf{Row 3}};
    
    \draw[-{Latex[length=2mm,width=3mm]}, very thick, \rowac] (\mid+\carsep*0+\rowsep*0, \gap+\vertsep*0) -- (\mid+\carsep*0+\rowsep*0,3);
    \draw[-{Latex[length=2mm,width=3mm]}, very thick, \rowac] (\mid+\carsep*1+\rowsep*0, \gap+\vertsep*3) -- (\mid+\carsep*1+\rowsep*0,3);
    \draw[-{Latex[length=2mm,width=3mm]}, very thick, \rowac] (\mid+\carsep*2+\rowsep*0, \gap+\vertsep*4) -- (\mid+\carsep*2+\rowsep*0,3);
    \draw[-{Latex[length=2mm,width=3mm]}, very thick, \rowbc] (\mid+\carsep*3+\rowsep*1, \gap+\vertsep*1) -- (\mid+\carsep*3+\rowsep*1,3);
    \draw[-{Latex[length=2mm,width=3mm]}, very thick, \rowbc] (\mid+\carsep*4+\rowsep*1, \gap+\vertsep*2) -- (\mid+\carsep*4+\rowsep*1,3);
    \draw[-{Latex[length=2mm,width=3mm]}, very thick, \rowac] (\mid+\carsep*5+\rowsep*1, \gap+\vertsep*5) -- (\mid+\carsep*5+\rowsep*1,3);
    \draw[-{Latex[length=2mm,width=3mm]}, very thick, \rowcc] (\mid+\carsep*6+\rowsep*2, \gap+\vertsep*6) -- (\mid+\carsep*6+\rowsep*2,3);

    \draw[very thick, \rowac] (2+\carsep*0+\rowsep*0, \gap + \vertsep*0) -- ++(\carsep*0,0);
    \draw[very thick, \rowbc] (2+\carsep*3+\rowsep*1, \gap + \vertsep*1) -- ++(\carsep*0,0);
    \draw[very thick, \rowbc] (2+\carsep*3+\rowsep*1, \gap + \vertsep*2) -- ++(\carsep*1,0);
    \draw[very thick, \rowac] (2+\carsep*0+\rowsep*0, \gap + \vertsep*3) -- ++(\carsep*1,0);
    \draw[very thick, \rowac] (2+\carsep*0+\rowsep*0, \gap + \vertsep*4) -- ++(\carsep*2,0);
    \draw[very thick, \rowac] (2+\carsep*0+\rowsep*0, \gap + \vertsep*5) -- ++(\carsep*5+\rowsep,0);
    \draw[very thick, \rowcc] (2+\carsep*6+\rowsep*2, \gap + \vertsep*6) -- ++(\carsep*0,0);

    \draw[dashed] (-.7, \gap + \vertsep*0) -- ++(\mid+\carsep*0+\rowsep*0,0); 
    \draw[dashed] (-.7, \gap + \vertsep*1) -- ++(\mid+\carsep*3+\rowsep*1,0); 
    \draw[dashed] (-.7, \gap + \vertsep*2) -- ++(\mid+\carsep*3+\rowsep*1,0); 
    \draw[dashed] (-.7, \gap + \vertsep*3) -- ++(\mid+\carsep*0+\rowsep*0,0); 
    \draw[dashed] (-.7, \gap + \vertsep*4) -- ++(\mid+\carsep*0+\rowsep*0,0); 
    \draw[dashed] (-.7, \gap + \vertsep*5) -- ++(\mid+\carsep*0+\rowsep*0,0); 
    \draw[dashed] (-.7, \gap + \vertsep*6) -- ++(\mid+\carsep*6+\rowsep*2,0); 

    \tikpref[(2+\carsep*0+\rowsep*0, \gap + \vertsep*0)]{\rowac}{$1$} 
    \tikpref[(2+\carsep*3+\rowsep*1, \gap + \vertsep*1)]{\rowbc}{$2$} 
    \tikpref[(2+\carsep*3+\rowsep*1, \gap + \vertsep*2)]{\rowbc}{$3$} 
    \tikpref[(2+\carsep*0+\rowsep*0, \gap + \vertsep*3)]{\rowac}{$4$} 
    \tikpref[(2+\carsep*0+\rowsep*0, \gap + \vertsep*4)]{\rowac}{$5$}
    \tikpref[(2+\carsep*0+\rowsep*0, \gap + \vertsep*5)]{\rowac}{$6$}
    \tikpref[(2+\carsep*6+\rowsep*2, \gap + \vertsep*6)]{\rowcc}{$7$}

    \mzncar[(\carsep*0+\rowsep*0,0)]{\rowac}{$1$} 
    \mzncar[(\carsep*1+\rowsep*0,0)]{\rowac}{$4$} 
    \mzncar[(\carsep*2+\rowsep*0,0)]{\rowac}{$5$} 
    \mzncar[(\carsep*3+\rowsep*1,0)]{\rowbc}{$2$} 
    \mzncar[(\carsep*4+\rowsep*1,0)]{\rowbc}{$3$}
    \mzncar[(\carsep*5+\rowsep*1,0)]{\rowac}{$6$}
    \mzncar[(\carsep*6+\rowsep*2,0)]{\rowcc}{$7$}

    \draw[fill=black,fill opacity=.2,draw = none,rounded corners=.5 ex] (-.5,0) rectangle (\carsep*7 + \rowsep*2, -2);
    
    \tikspotc[(\carsep*0 + \rowsep*0,0)]{$1$}{\rowac!25!black}
    \tikspotc[(\carsep*1 + \rowsep*0,0)]{$2$}{\rowac!25!black}
    \tikspotc[(\carsep*2 + \rowsep*0,0)]{$3$}{\rowac!25!black}
    \tikspotc[(\carsep*3 + \rowsep*1,0)]{$4$}{\rowbc!25!black}
    \tikspotc[(\carsep*4 + \rowsep*1,0)]{$5$}{\rowbc!25!black}
    \tikspotc[(\carsep*5 + \rowsep*1,0)]{$6$}{\rowbc!25!black}
    \tikspotc[(\carsep*6 + \rowsep*2,0)]{$7$}{\rowcc!25!black}

    \draw (-.75,\gap + \vertsep*0) node[left]{$\pi(1)$ : \textcolor{black}{spot $1$}, \textcolor{\rowac}{row $1$}};
    \draw (-.75,\gap + \vertsep*1) node[left]{$\pi(2)$ : \textcolor{black}{spot $4$}, \textcolor{\rowbc}{row $2$}};
    \draw (-.75,\gap + \vertsep*2) node[left]{$\pi(3)$ : \textcolor{black}{spot $4$}, \textcolor{\rowbc}{row $2$}};
    \draw (-.75,\gap + \vertsep*3) node[left]{$\pi(4)$ : \textcolor{black}{spot $1$}, \textcolor{\rowac}{row $1$}};
    \draw (-.75,\gap + \vertsep*4) node[left]{$\pi(5)$ : \textcolor{black}{spot $1$}, \textcolor{\rowac}{row $1$}};
    \draw (-.75,\gap + \vertsep*5) node[left]{$\pi(6)$ : \textcolor{black}{spot $1$}, \textcolor{\rowac}{row $1$}};
    \draw (-.75,\gap + \vertsep*6) node[left]{$\pi(7)$ : \textcolor{black}{spot $7$}, \textcolor{\rowcc}{row $3$}};

\end{tikzpicture}
    \caption{A row parking function can be understood as a restricted parking function through their identical parking procedures. As a row parking function, cars prefer rows $(1,2,2,1,1,3,1)$, with row capacities $(3,3,1)$. As a restricted parking function, $\pi = (1, 4, 4, 1, 1, 7, 1) \in \mathrm{PF}_{7 \mid \{ 1, 4, 7 \}}$. This is an example of the case $g = s = 3$, $k = 2$.}
    \label{fig:row-parking}
\end{figure}
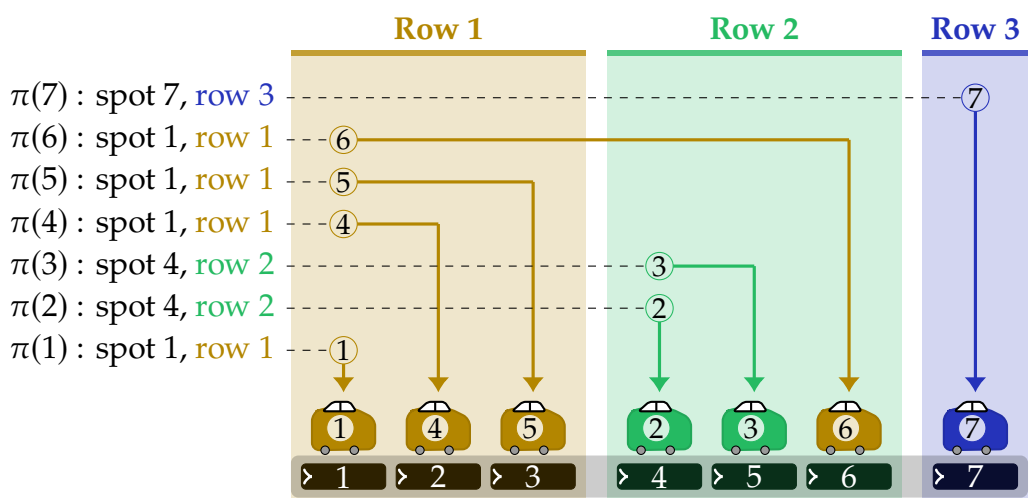

\begin{restatable}{theorem}{modPFcount}
	\label{thm:modPFcount}
	The number of parking functions of length $gs - 1$ with gap $g$ between possible preferred spots is
	\[
		\# \mathrm{PF}_{gs - 1 \mid S} = s^{gs - 2}
	\]
	where $S$ is the set of the first $s$ natural numbers $j$ with $j \equiv 1 \pmod g$.
\end{restatable}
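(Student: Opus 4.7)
The plan is to adapt Pollak's classical circular parking argument, as the paper hints. I would place the $gs-1$ cars on a circular street with $gs$ spots (adding one phantom spot), keeping preferences in $S = \{1, 1+g, \dots, 1+(s-1)g\}$. There are $s^{gs-1}$ such preference functions, and each leaves exactly one spot empty after the circular parking procedure, so I would count parking functions by pinning down which spots can be the empty one.

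The key structural step --- and the main obstacle --- is to prove that each of the $s$ consecutive blocks (``rows'') of $g$ spots always fills as a prefix from its first spot. This follows by induction on the number of parked cars: a new car enters at a row start in $S$, and whenever it overflows it crosses into the next row again at that row's first spot, so the first empty spot it encounters sits directly after the current prefix of whichever row it ends up in. Consequently with $gs-1$ cars exactly one row is short by one car, and the single empty spot must land at one of the $s$ row-end positions $\{g, 2g, \dots, sg\}$. Everything that follows is bookkeeping on top of this lemma.

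Next I would use the action of $\mathbb{Z}/s\mathbb{Z}$ by rotation by $g$: since $S$ is invariant under $p \mapsto p + g \pmod{gs}$ and the circular procedure is translation-invariant, rotating a preference function shifts the final parking outcome, and hence the empty spot, by $g$. The action is therefore free and each orbit of size $s$ hits each of the $s$ admissible empty-spot positions exactly once, so the number of preference functions whose empty spot lands at position $gs$ is $s^{gs-1}/s = s^{gs-2}$.

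Finally I would identify these preference functions with $\mathrm{PF}_{gs-1 \mid S}$: the empty spot being at position $gs$ means no car ever crossed the cut between $gs$ and $1$, so the circular procedure coincides with the linear one on the street of $gs-1$ spots and every car parks. This gives $\#\mathrm{PF}_{gs-1 \mid S} = s^{gs-2}$, as desired.
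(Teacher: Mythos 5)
Your proposal is correct and follows essentially the same route as the paper: the paper proves \cref{thm:modPFcount} as the $n=1$ case of the general circular argument for $gs-k$ spots, using the same observations that the empty spots on the $gs$-spot circle must sit immediately before a preferable spot (your prefix-filling lemma) and that rotation by $g$ lets one divide $s^{gs-1}$ by $s$. Your explicit inductive justification of the prefix-filling invariant and the equivariance of the empty spot under rotation is a slightly more careful write-up of the same Pollak-style argument, not a different proof.
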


This is really the first case of a more general recursive result for parking functions of length $gs - k$. Here compositions $\lambda = (\lambda_{1}, \dots, \lambda_{n})$ with $\sum_{i = 1}^{n} \lambda_{i} = s$ are denoted $\lambda\vDash s, \lvert\lambda\rvert = n$ while $\binom{n}{\lambda}$ denotes the number of ways to partition a set of size $n$ into parts with sizes given by $\lambda_i$.

\begin{restatable}{theorem}{modPFcountGen}
	Given $g,s\in\mathbb{Z}_{>0},$ $1\le k\le gs,$ the number of parking functions of length $gs - k$ with gap $g$ between possible preferred spots is $\mathrm{PF}_{gs - k \mid S \cap [gs - k]}$ and satisfies the relation
	\[
		s^{gs - k} = s \, \# \mathrm{PF}_{gs - k \mid S \cap [gs - k]} + \sum_{n = 2}^{k} \frac{s}{n} \sum_{\substack{\lambda \vDash k \\ \lvert\lambda\rvert = n}} \sum_{\substack{\mu \vDash s \\\lvert \mu \rvert = n}} \binom{gs - k}{g \mu - \lambda} \prod_{i = 1}^{n} \# \mathrm{PF}_{g \mu_{i} - \lambda_{i} \mid S \cap [g \mu_{i} - \lambda_{i}]}
	\]
	where $S$ is the set of all natural numbers $j$ with $j \equiv 1 \pmod g$ and $\lambda \vDash k$ indicates that $\lambda$ is a composition of $k$ with length $|\lambda|.$
\end{restatable}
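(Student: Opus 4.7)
The plan is to adapt Pollak's classical circular argument to the row-restricted setting. Imagine a circular street of $gs$ spots partitioned into $s$ rows of length $g$, with preferences only at row-starts (the elements of $S$). Any preference list $\pi\colon [gs-k] \to S$ yields a valid circular parking outcome with $gs-k$ filled spots and $k$ empty spots, and the goal is to count the $s^{gs-k}$ such preference lists by classifying them by the shape of this outcome.

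First I would establish the structural decomposition. Because every preference is a row-start, every maximal run of filled spots on the circle must begin at a row-start; hence the filled clusters and empty gaps alternate in some common number $n \in \{1,\dots,k\}$, with the $i$th filled cluster spanning $\mu_i$ consecutive rows followed by $\lambda_i \ge 1$ empty cells. This yields compositions $\mu \vDash s$ and $\lambda \vDash k$ with $|\mu|=|\lambda|=n$. A direct argument then shows that the $g\mu_i - \lambda_i$ cars in the $i$th cluster have preferences confined to $S \cap [g\mu_i - \lambda_i]$ (after translating that cluster to begin at spot $1$) and form a parking function there, since no car can cross a gap during the circular procedure and each cluster behaves as an independent linear parking problem.

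Next I would count pairs (preference list, ordering data), where the ordering data consists of a designation of which filled cluster is the ``first'' and the starting row $r \in [s]$ of that cluster. Given an ordered composition pair $(\mu, \lambda)$ together with $r$, the number of preference lists realizing the resulting configuration is
\[
\binom{gs-k}{g\mu - \lambda}\prod_{i=1}^{n} \#\mathrm{PF}_{g\mu_i - \lambda_i \mid S \cap [g\mu_i - \lambda_i]},
\]
since one first distributes the $gs-k$ cars among the clusters via the multinomial coefficient, then chooses a valid local parking function within each cluster.

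Finally I would correct for overcounting. Each physical configuration on the cycle admits exactly $n$ ordered descriptions $(\mu,\lambda,r)$, one for each choice of which cluster to label ``first'' (with $r$ determined thereafter). A short check confirms this count is $n$ even when $(\mu,\lambda)$ has cyclic symmetry: a stabilizer of size $d$ collapses the $n$ cyclic rotations of $(\mu,\lambda)$ into $n/d$ distinct ordered tuples but multiplies the compatible values of $r$ by $d$, leaving the total $n$. Dividing by $n$, summing $r$ over $[s]$, and isolating the $n=1$ term $\mu=(s),\ \lambda=(k)$ (which contributes exactly $s \cdot \#\mathrm{PF}_{gs-k \mid S \cap [gs-k]}$) yields the stated identity. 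I expect the main technical obstacle to be precisely this overcounting bookkeeping; the parking analysis itself is a direct generalization of the $k=1$ case handled in \cref{thm:modPFcount}.
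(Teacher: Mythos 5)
Your proposal follows the paper's proof essentially verbatim: the same circular street of $gs$ spots with preferences at row-starts, the same decomposition into filled clusters and gaps indexed by compositions $\mu \vDash s$ and $\lambda \vDash k$, the same multinomial-times-local-parking-function count, and the same $s/n$ symmetry correction. Your explicit check that the overcount is exactly $n$ even for cyclically symmetric $(\mu,\lambda)$ is a bit more careful than the paper's one-line justification, but the argument is the same.
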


Note that here, for notational convenience, $S$ is not a subset of the various $m$ that are the lengths of the parking functions. Rather, $[m] \cap S$ is the relevant subset. 

\begin{proof}
    Consider $gs-k$ cars parking along a circular street with $gs$ spots; similarly to the proof of \cref{thm:modPFcount}, we only permit cars to prefer the $s$ distinct spots $1,g+1,2g+1,\ldots$ around the street. There are $s^{gs-k}$ total ways to choose these preferences, and every such choice will lead to all cars parking.

    The $k$ empty spots on the street will be distributed in $n$ contiguous segments for some $1\le n\le k.$ These unfilled segments will all be immediately prior to spots that can be preferred; all other scenarios force cars to pass unfilled spots without parking in them.


\begin{figure}[h]
    \centering
    \begin{tikzpicture}[scale=0.28]
        \def\lotor{7}
        \def\outerr{(\lotor - .5)};
        \def\innerr{(\lotor - 1.7)};
        \def\anglediff{30};
        \definecolor{carcolor}{RGB}{37, 51, 186}
        \newcommand{\circlespot}[2][40]{
            \begin{scope}[rotate around = {#1:(0,0)}]
                \draw[fill = black, rounded corners = 0.5ex] 
                        ({-\outerr*sin(\anglediff/2)},{\outerr*cos(\anglediff/2)}) 
                        arc (90+\anglediff/2:90-\anglediff/2:{\outerr})
                        -- ({\innerr*sin(\anglediff/2)},{\innerr*cos(\anglediff/2)}) arc (90-\anglediff/2:90+\anglediff/2:{\innerr}) -- cycle;
                \draw (0, {\outerr*.5+\innerr*.5}) node[text = white, rotate = #1]{#2};
            \end{scope}
        }

        \begin{scope}     
        \draw[fill = black, fill opacity = .2, draw = none] (0,0) circle[radius = \lotor];        
        \draw[fill = white, draw = none] (0,0) circle[radius = \lotor-2.2];
        \draw[fill = none, dotted, thick] (0,0) circle[radius = \lotor + 3.5];

        \draw[dotted, thick] (0,0) -- ({(\lotor+3.5)*sin(    -20)}, {(\lotor+3.5)*cos(    -20)}); 
        \draw[dotted, thick] (0,0) -- ({(\lotor+3.5)*sin(40*3-20)}, {(\lotor+3.5)*cos(40*3-20)}); 
        \draw[   very thick] (0,0) -- ({(\lotor+4.5)*sin(40*6-20)}, {(\lotor+4.5)*cos(40*6-20)}); 
        \draw[->,very thick] ({(\lotor+4.5)*sin(40*6-20)}, {(\lotor+4.5)*cos(40*6-20)}) arc ({40*6-10}:{40*6-30}:{\lotor+4.5}); 
        \draw[fill = white, dotted, thick] (0,0) circle[radius = \lotor-2.7];

        \draw[->, thick] (0,0) -- (0,\lotor-3);                        
        \draw[->, thick] (0,0) -- ({(\lotor-3)*sin(40*3)}, {(\lotor-3)*cos(40*3)}); 
        \draw[->, thick] (0,0) -- ({(\lotor-3)*sin(40*6)}, {(\lotor-3)*cos(40*6)}); 
        \draw[fill = white, thin, align = center] (0,0) circle[radius = 2.5] node{\small Spots\\$g\sigma_i+1$};
        
        \circlespot[-40*0]{1}
        \circlespot[-40*1]{2}
        \circlespot[-40*2]{3}
        \circlespot[-40*3]{4}
        \circlespot[-40*4]{5}
        \circlespot[-40*5]{6}
        \circlespot[-40*6]{7}
        \circlespot[-40*7]{8}
        \circlespot[-40*8]{9}

        \begin{scope}[rotate around = {-40*0:(0,0)}] \mzncarrot[(-2,\lotor)]{carcolor}{2}{-40*0} \end{scope}
        \begin{scope}[rotate around = {-40*1:(0,0)}] \mzncarrot[(-2,\lotor)]{carcolor}{3}{-40*1} \end{scope}
        \begin{scope}[rotate around = {-40*2:(0,0)}] \mzncarrot[(-2,\lotor)]{carcolor}{6}{-40*2} \end{scope}
        \begin{scope}[rotate around = {-40*3:(0,0)}] \mzncarrot[(-2,\lotor)]{carcolor}{7}{-40*3} \end{scope}
        \begin{scope}[rotate around = {-40*6:(0,0)}] \mzncarrot[(-2,\lotor)]{carcolor}{1}{-40*6} \end{scope}
        \begin{scope}[rotate around = {-40*7:(0,0)}] \mzncarrot[(-2,\lotor)]{carcolor}{4}{-40*7} \end{scope}
        \begin{scope}[rotate around = {-40*8:(0,0)}] \mzncarrot[(-2,\lotor)]{carcolor}{5}{-40*8} \end{scope}

        \draw[align = center] (0,-\lotor-6) node {Circular preference list: $(7,1,1,7,7,7,4)$.\\Linear result: $(1,4,4,1,1,7,1)\in \mathrm{PF}_{7 \mid \{ 1, 4, 7 \}}$.};
        \end{scope}

        \begin{scope}[shift = {(\lotor*2 + 13, 0)}]    
        \draw[fill = black, fill opacity = .2, draw = none] (0,0) circle[radius = \lotor];        
        \draw[fill = white, draw = none] (0,0) circle[radius = \lotor-2.2];
        \draw[fill = none, dotted, thick] (0,0) circle[radius = \lotor + 3.5];

        \draw[dotted, thick] (0,0) -- ({(\lotor+3.5)*sin(    -20)}, {(\lotor+3.5)*cos(    -20)}); 
        \draw[dotted, thick] (0,0) -- ({(\lotor+3.5)*sin(40*3-20)}, {(\lotor+3.5)*cos(40*3-20)}); 
        \draw[dotted, thick] (0,0) -- ({(\lotor+3.5)*sin(40*6-20)}, {(\lotor+3.5)*cos(40*6-20)}); 
        \draw[fill = white, dotted, thick] (0,0) circle[radius = \lotor-2.7];

        \draw[->, thick] (0,0) -- (0,\lotor-3);                        
        \draw[->, thick] (0,0) -- ({(\lotor-3)*sin(40*3)}, {(\lotor-3)*cos(40*3)}); 
        \draw[->, thick] (0,0) -- ({(\lotor-3)*sin(40*6)}, {(\lotor-3)*cos(40*6)}); 
        \draw[fill = white, thin, align = center] (0,0) circle[radius = 2.5] node{\small Spots\\$g\sigma_i+1$};
        
        \circlespot[-40*0]{1}
        \circlespot[-40*1]{2}
        \circlespot[-40*2]{3}
        \circlespot[-40*3]{4}
        \circlespot[-40*4]{5}
        \circlespot[-40*5]{6}
        \circlespot[-40*6]{7}
        \circlespot[-40*7]{8}
        \circlespot[-40*8]{9}

        \begin{scope}[rotate around = {-40*0:(0,0)}] \mzncarrot[(-2,\lotor)]{carcolor}{1}{-40*0} \end{scope}
        \begin{scope}[rotate around = {-40*1:(0,0)}] \mzncarrot[(-2,\lotor)]{carcolor}{3}{-40*1} \end{scope}
        \begin{scope}[rotate around = {-40*3:(0,0)}] \mzncarrot[(-2,\lotor)]{carcolor}{2}{-40*3} \end{scope}
        \begin{scope}[rotate around = {-40*4:(0,0)}] \mzncarrot[(-2,\lotor)]{carcolor}{4}{-40*4} \end{scope}
        \begin{scope}[rotate around = {-40*5:(0,0)}] \mzncarrot[(-2,\lotor)]{carcolor}{6}{-40*5} \end{scope}
        \begin{scope}[rotate around = {-40*6:(0,0)}] \mzncarrot[(-2,\lotor)]{carcolor}{5}{-40*6} \end{scope}
        \begin{scope}[rotate around = {-40*7:(0,0)}] \mzncarrot[(-2,\lotor)]{carcolor}{7}{-40*7} \end{scope}

        \draw[align = center] (0,-\lotor-6) node {Circular preference list: $(1,4,1,4,7,4,4)$.\\No resulting parking function.};
        \end{scope}
    \end{tikzpicture}
    
    \caption{For $g = s = 3$ and $k = 2$, the two empty spots could be the last two spots $g\sigma + 2$ and $g\sigma + 3$ in a ``row'', or the last two spots $g\sigma_1 + 3$ and $g\sigma_2 + 3$ in two different ``rows''. Radial arrows indicate spots $g\sigma_i + 1$ which may be in the initial circular preference list.}
    \label{fig:circular}
\end{figure}

    Our gaps will be of size $\lambda_1,\lambda_2,\ldots,\lambda_n,$ all positive and summing up to $k$; we denote this as $\lambda\vDash k,|\lambda|=n.$ These unfilled segments can each be paired with the filled segments immediately prior to them; by virtue of the locations of unfilled segments, these pairs will each have total length a multiple of $g.$ Call these $g\mu_1,g\mu_2,\ldots,g\mu_n$; since the sum of all of these is $gs,$ we have $\mu\vDash s,|\mu|=n.$

    For each such partition of this type, each filled segment will have $g\mu_i-\lambda_i$ cars; choosing which cars will have preferences in each segment can be done in $\binom{gs - k}{g \mu_1 - \lambda_1,g \mu_2 - \lambda_2,\ldots,g \mu_n - \lambda_n}=\binom{gs - k}{g \mu - \lambda}$ ways.
    
    Within each segment, we have $g\mu_i-\lambda_i$ cars which can only prefer spots which are $1$ mod $g$ and must park in the first $g\mu_i-\lambda_i$ spots without gaps. This is precisely a parking function with modular restrictions, and can be done in $\# \mathrm{PF}_{gs - k \mid S \cap [gs - k]}$ ways.

    Finally, we must choose how our parking functions are placed within the circle itself. There are $s$ ways to place the first possible preference in the first restricted parking function, and all placements are forced from there onward; however, for any given $n$ we are over-counting by a factor of $n$, since all cyclic permutations of a given $\lambda$ and $\mu$ are equivalent, but all are counted. Thus we multiply by $\frac{s}{n}$, for a total of 
    \[
	    s^{gs - k} = \sum_{n = 1}^{k} \frac{s}{n} \sum_{\substack{\lambda \vDash k \\ \lvert\lambda\rvert = n}} \sum_{\substack{\mu \vDash s \\\lvert \mu \rvert = n}} \binom{gs - k}{g \mu - \lambda} \prod_{i = 1}^{n} \# \mathrm{PF}_{g \mu_{i} - \lambda_{i} \mid S \cap [g \mu_{i} - \lambda_{i}]}.
    \]
    The formula above arises from pulling out the desirable $n = 1$ term.
\end{proof}

In practice, the formula is computationally intractable for large $g, s, k$. However, it can be useful to show special cases of small $k$ as in \cref{thm:modPFcount} and below.

\begin{restatable}{example}{modPFcount2}
	The number of parking functions of length $gs - 2$ cars with gap $g$ between possible preferred spots is
	\[
		\#\mathrm{PF}_{gs - 2 \mid S} = s^{gs - 3} - \frac{1}{2} \sum_{i = 1}^{s - 1} \binom{gs - 2}{gs - 1} i^{gi - 2} (s - i)^{g(s - i) - 2}
	\]
	where $S$ is the set of the first $s$ natural numbers $j$ with $j \equiv 1 \pmod g$.
\end{restatable}

\section{Connections and further avenues} \label{sec:connections}

We conclude with a brief exploration of particularly interesting connections and leads for the enterprising reader to explore further.

\subsection{$\mathbf{u}$-parking functions}

$S$-restricted parking functions are closely related to a certain class of vector (or $\mathbf{u}$-)parking functions, associated to vectors $\mathbf{u}=(u_1,u_2,u_3,\ldots,u_n)$ and defined by \cite{yan-survey-2015} as functions whose $i$th-smallest output is at most $u_i$. 

By the following claim, any set of $S$-restricted parking functions is ``naturally" in bijection with a corresponding set of $\mathbf{u}$-parking functions:

\begin{theorem}
    For any $S\subseteq[n],$ $S$-restricted parking functions are in bijection with the $\mathbf{u}$-parking functions associated to the $\mathbf{u}$ given by $u_i=|S\cap [i]|.$
\end{theorem}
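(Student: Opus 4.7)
The plan is to construct the natural bijection via the order-preserving relabeling of $S$. Enumerate $S = \{s_1 < s_2 < \cdots < s_m\}$ where $m = |S|$, and let $\phi \colon S \to [m]$ be the unique order-preserving bijection, so $\phi(s_k) = k$. Define the map
\[
    \Phi \colon \mathrm{PF}_{n \mid S} \to \{\text{$\mathbf{u}$-parking functions}\}, \qquad \Phi(\pi) = \phi \circ \pi,
\]
with inverse $\Phi^{-1}(\psi) = \phi^{-1} \circ \psi$. Since $\phi$ is a bijection of sets, $\Phi$ is automatically a bijection on the level of arbitrary functions $[n] \to S$ versus $[n] \to [m]$; the content is to show it restricts to a bijection between the two specified families.

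The key observation is the equivalence
\[
    s_k \le i \quad \Longleftrightarrow \quad k \le |S \cap [i]|,
\]
which holds because $|S \cap [i]|$ is precisely the number of indices $j$ with $s_j \le i$, and the $s_j$ are listed in increasing order. I would then relate the order-preserving rearrangements: since $\phi$ is order-preserving, $(\phi \circ \pi)^\uparrow = \phi \circ \pi^\uparrow$, so $\pi^\uparrow(i) = s_{(\Phi\pi)^\uparrow(i)}$. Applying the equivalence above with $k = (\Phi\pi)^\uparrow(i)$ yields
\[
    \pi^\uparrow(i) \le i \quad \Longleftrightarrow \quad (\Phi\pi)^\uparrow(i) \le |S \cap [i]| = u_i.
\]
The left-hand condition is exactly the Catalan condition characterizing $\pi \in \mathrm{PF}_{n \mid S}$ (since $\pi$ already has image in $S$), and the right-hand condition is exactly the $\mathbf{u}$-parking condition. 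So $\pi$ is an $S$-restricted parking function if and only if $\Phi(\pi)$ is the corresponding $\mathbf{u}$-parking function.

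Finally, I would note that the direction $\Phi^{-1}$ needs its image to actually land in $S$: this is automatic because $\phi^{-1}$ is defined on $[m]$, and any $\mathbf{u}$-parking function $\psi$ with $u_i = |S \cap [i]|$ satisfies $\psi^\uparrow(n) \le u_n = m$, so $\psi$ takes values in $[m]$ and $\phi^{-1} \circ \psi$ is well-defined with image in $S$. There is no real obstacle here; the only subtle point is correctly invoking the if-and-only-if in the key observation so that the bijection works in both directions simultaneously, rather than just establishing an inclusion.
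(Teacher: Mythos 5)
Your proposal is correct and takes essentially the same route as the paper: the paper's map $f(i)=u_i$, restricted to $S$, is exactly your order-preserving relabeling $\phi$, and both arguments hinge on the equivalence $s_k\le i \iff k\le |S\cap[i]|$ together with compatibility of order-preserving rearrangements. Your write-up is somewhat more explicit than the paper's about this equivalence and about why the inverse lands in $S$, but the underlying bijection is identical.
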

\begin{proof}
    Letting $s=|S|,$ take the function $f:[n]\to [s]$ given by sending $i$ to $u_i$; note that this is order-preserving and sends the $j$th-smallest element of $S$ to $j$. We claim that the function $f_*: \pi\mapsto f\circ \pi$ is a bijection between $S$-restricted parking functions and $\mathbf{u}$-parking functions.

    Note first that $f_*$ is injective since $f$ is injective on $S$; it also takes functions with codomain $S$ to functions with codomain $[s].$
    
    $S$-restricted parking functions are those functions $[n]\to S$ whose $i$th-smallest preference is at most $i$ for all $i\in [n]$. Since $f$ is order-preserving, the image of this set under $f_*$ is the set of functions $[n]\to [s]$ whose $i$th-smallest preference is at most $f(i)=u_i.$

    But this is precisely the set of $\mathbf{u}$-parking functions, as desired! We therefore have a bijection.
\end{proof}

This means that most of our results about restricted parking functions can also be phrased as results about particular classes of $\mathbf{u}$-parking functions!

\subsection{Hyperplane arrangements}

In considering permutation-invariant parking functions, \cite{chen-2023} comes across a class of $\mathbf{u}$-parking functions equivalent to initial-segment restrictions. In the particular cases of $s=1,2,3$, \cref{thm:resPFcount1} gives $1$, $2^n-1$, and $3^n-2^n-n$ $[s]$-restricted parking functions. They note in particular that $s=3$ generates the sequence \cite[A001263]{oeis}, as there happen to be $3^n-2^n-n$ regions in a structure called the $C_{n}$-Shi hyperplane arrangement where $C_{n}$ is the cyclic graph with $n$ vertices.

The Shi hyperplane arrangement in $n$ dimensions consists of the hyperplanes $x_i-x_j=0$ and $x_i-x_j=1$ for all $i,j\in[n]$ where $i<j$. Famously, it has $(n+1)^{n-1}$ regions, which are in bijection with parking functions through maps given by \cite{athanasiadis-linusson-1999} among others. \cite{bennett-2024} constructs a subset of this arrangement assigned to any labeled graph $G$ on $n$ vertices called the $G$-Shi arrangement. The $G$-Shi arrangement consists of hyperplanes $x_i-x_j=0$ and $x_i-x_j=1$ for all $i,j\in[n]$ where $i<j$ and the edge $ij$ is in $G$. In particular $C_n$-Shi arrangement consists of the hyperplanes $x_i-x_{i+1}=0,1$ for $i\in[n-1]$ along with the hyperplanes $x_1-x_n=0,1$. The fact that this connects to preference-restricted parking functions seems more than coincidence!

It seems worth considering this construction for other subgraphs, or even for other hyperplane arrangements. The Linial arrangement (as described by e.g. \cite{stanley-hyperplanes-2007}) consists of the hyperplanes $x_i-x_j=1$ for all $i,j\in[n]$ where $i<j.$ One could define a $G$-Linial arrangement in much the same way, consisting of the hyperplanes $x_i-x_j=1$ for all $i,j\in[n]$ where $i<j$ and the edge $ij$ is in $G$. In the case of $G=C_n$, this yields $2^n-1$ regions, as all ``possible" regions exist (except for $x_i-x_{i+1}>1$ for $i\in[n-1],$ $x_1-x_n<1$). This is exactly the number of $[2]$-restricted parking functions of length $n$. Cleverer constructions may yield interesting results.

\subsection{Parking polytopes}

In \cite{andres-others-2023}, the convex hull $\mathfrak{X}$ of weakly increasing $\mathbf{x}$-parking functions is studied in the special case $\mathbf{x} = (a, b, b, \dots, b)$ (recall that an $\mathbf{x}$-parking function is a $\mathbf{u}$-parking function with $u_{i} = x_{1} + \dots + x_{i}$). In particular, it is shown that $\mathfrak{X}$ is integrally equivalent to the Pitman-Stanley polytope of $\mathbf{x}$. This result provides a way to count integer points in the Pitman-Stanley polytope by counting weakly increasing parking functions and vice versa.

Our $[s]$-restricted parking functions correspond to $\mathbf{x}$-parking functions with $\mathbf{x} = (\underbrace{1, \dots, 1}_{s \text{ times}}, 0, \dots, 0)$. If the results of \cite{andres-others-2023} can be generalized to work with this $\mathbf{x}$, then we would have new ways to count integer points in the corresponding Pitman-Stanley polytope using our formulas for weakly increasing $[s]$-restricted parking functions. Given the Pitman-Stanley polytope's central importance, this introduces the possibility of connections to all kinds of geometric combinatorics.

\bibliography{references}
\bibliographystyle{alpha}

\end{document}